\documentclass[onefignum,onetabnum]{siamonline250211}

\usepackage{amsfonts}

\usepackage{enumitem}
\setlist[enumerate]{leftmargin=.5in}
\setlist[itemize]{leftmargin=.5in}

\usepackage{amssymb}
\usepackage{amsopn}
\usepackage{mathtools}
\usepackage{hyperref}
\usepackage{needspace}
\usepackage{tikz}
\usetikzlibrary{arrows.meta}
\newsiamremark{remark}{Remark}

\headers{Balanced Factors and Strategy Scaling}{H. Hu, J. Wang, and M. Wang}

\title{Algebraic Degree of Network Games: Balanced Factors and Strategy Scaling\thanks{Preprint, version 2.0. \today.\funding{This work was funded by the National Natural Science Foundation of China (Grant No. 72243005), the National Key Research and Development Program of China (Grant No. 2020YFA0608602), the major project of philosophy and social science research in colleges and universities of Jiangsu Province (2024SJZD129), the major project of Basic Science (Natural science) research in colleges and universities of Jiangsu Province (24KJA110002), Special Science and Technology Innovation Program for Carbon Peak and Carbon Neutralization of Jiangsu Province (Grant No. BE2022612) and Open project Fund for Ministry of Education Key Laboratory of NSLSCS (202408). }}}

\author{
Hangkun Hu\thanks{Ministry of Education Key Laboratory of NSLSCS, School of Mathematical Sciences, Nanjing Normal University, Nanjing 210023, China (\email{250901005@njnu.edu.cn}).}
\and
Jingyi Wang\thanks{Ministry of Education Key Laboratory of NSLSCS, School of Mathematical Sciences, Nanjing Normal University, Nanjing 210023, China (\email{wjy05100@gmail.com}).}
\and
Minggang Wang\thanks{Ministry of Education Key Laboratory of NSLSCS, School of Mathematical Sciences, Nanjing Normal University, Nanjing 210023, China; Department of Mathematics, Nanjing Normal University Taizhou College, Taizhou 225300, Jiangsu, China (\email{05424@njnu.edu.cn}). Corresponding author.}
}

\ifpdf
\hypersetup{
  pdftitle={Algebraic Degree of Network Games: Balanced Factors and Strategy Scaling},
  pdfauthor={Hangkun Hu, Jingyi Wang, and Minggang Wang}
}
\fi

\DeclareMathOperator{\perm}{perm}
\DeclareMathOperator{\conv}{conv}

\begin{document}

\maketitle

\begin{abstract}
The algebraic degree of a network game is the generic number of isolated complex-torus solutions of its polynomial indifference system.  We recast the classical semi-mixed coefficient formula as an edge-marked player-level polynomial whose monomials are balanced directed multigraphs with prescribed in- and out-degrees.  This representation separates the intrinsic counting problem from strategy-label refinements: exact evaluation remains $\#\mathrm P$-complete, but for a fixed number of players it is polynomial in the numerical strategy dimensions, while nonvanishing is decided by a capacitated flow test.  It also characterizes inclusion-minimal positive-degree supports as integral transportation forests and yields a sharp $2N-1$-arc positive core.  Exact-support coefficients form connected transportation fibers and give a nonnegative support calculus.  Under proportional strategy growth $m\mathbf k$, a lattice local limit theorem expresses the first-order degree asymptotic through the capacity and maximum-entropy flow of the essential support.  All counts are generic and complex; reality and simplex feasibility remain payoff-dependent.
\end{abstract}

\begin{keywords}
Network games; algebraic degree; balanced factors; transportation polytopes; structural zeros; Hall condition; polynomial capacity; maximum entropy; mixed volume; permanent
\end{keywords}

\begin{MSCcodes}
91A43, 05C70, 05A16, 15A15, 52B20, 60F05, 90C08
\end{MSCcodes}

\section{Introduction}
\label{sec:intro}

Network games model strategic interactions with local dependence.  An arc $j\to i$ in the directed dependency graph records that player $i$'s payoff may depend on player $j$'s action \cite{KearnsLittmanSingh2001}.  A totally mixed equilibrium satisfies a square polynomial indifference system together with strict simplex inequalities.  We study its \emph{algebraic degree}: the generic number of isolated complex-torus solutions, counted with multiplicity.  It bounds, but does not equal, the number of real feasible equilibria.

For complete support, McKelvey and McLennan obtained the sharp count by constrained set partitions \cite{McKelveyMcLennan1997}.  Datta expressed the generic count through a permanent and incorporated graphical structural zeros \cite{Datta2003,DattaPolynomialGraphs2006,Datta2010}; Bernstein's theorem gives the equivalent mixed-volume formulation \cite{Bernshtein1975,HuberSturmfels1995}.  Semi-mixed multihomogeneous counts also admit classical B\'ezout and MacMahon generating-function descriptions \cite{MacMahon1915,EmirisVidunas2014,Emiris2017}.  The root-count identity used here is therefore foundational.  Our contribution is to retain the player blocks and mark the dependency arcs, thereby turning that identity into a support-sensitive transportation invariant.

A complementary vector-bundle formulation realizes the equilibrium scheme as the zero scheme of a section and obtains its degree from a top Chern class \cite{AboPortakalSodomaco2026}.  We record the direct intersection-theoretic interface in Remark~\ref{rem:intersection-theoretic-interface}.  Related undirected graphical models impose conditional-independence constraints on joint distributions rather than the directed payoff dependence considered here \cite{PortakalSendraArranz2025,BouyerPortakalSendraArranz2025}.

Write $\mathbf k=(k_1,\ldots,k_N)$ and attach a variable $x_{ji}$ to every potential arc $j\to i$.  The central object is
\[
\Phi_{\mathbf k}(x)
=[z^{\mathbf k}]
\prod_{i=1}^N\left(\sum_{j\ne i}x_{ji}z_j\right)^{k_i}.
\]
The paper studies the invariant $(E,\mathbf k)\mapsto\mathcal D(E,\mathbf k)$ through changes of support $E$ and proportional dilation of $\mathbf k$.  Its structural conclusions are summarized as follows.

\medskip
\noindent\textbf{Theorem A.}
\emph{Let $G$ be a directed dependency graph with arc set $E(G)$.  Then:}
\begin{enumerate}[label=\textup{(\roman*)}]
\item \emph{the generic degree is the support specialization of $\Phi_{\mathbf k}$ and the weighted count of balanced $\mathbf k$-factors,}
\begin{equation}
\label{eq:intro-network-degree}
\mathcal D(G,\mathbf k)
=\Phi_{\mathbf k}(\mathbf 1_{E(G)})
=\sum_{a\in\mathfrak E_{\mathbf k}(E(G))}
\prod_{i=1}^N\frac{k_i!}{\prod_{j\ne i}a_{ji}!}.
\end{equation}
\item \emph{exact evaluation is $\#\mathrm P$-complete already for $\mathbf k=\mathbf1$, whereas a truncated coefficient algorithm has $\prod_j(k_j+1)$ states and is polynomial in the numerical strategy dimensions for fixed $N$;}
\item \emph{positivity is equivalent to the capacitated Hall inequalities and is decidable by maximum flow.  Inclusion-minimal positive supports are precisely positive integral transportation forests.  Every positive-degree support contains such a core with at most $2N-1$ arcs, and this bound is sharp.}
\end{enumerate}

Here $\mathfrak E_{\mathbf k}(E)$ denotes the integral arc multiplicities supported on $E$ with in- and out-degree $k_i$ at every vertex.  The generic realization is \cref{prop:generic-realization}; exact-support refinements give monotonicity, supermodularity, residual Hall tests, cycle moves, and an Ehrhart interpretation.

For proportional growth, let $F^*=E_{\mathbf k}^*(E(G))$ be the set of arcs that are positive in some feasible transportation flow, and define
\[
\operatorname{cap}_{\mathbf k}(F^*)
=\inf_{z>0}
\frac{\prod_i(\sum_{j\to i\in F^*}z_j)^{k_i}}{\prod_jz_j^{k_j}}.
\]
\medskip
\noindent\textbf{Theorem B.}
\emph{Suppose $\mathcal D(G,\mathbf k)>0$, put $F^*=E_{\mathbf k}^*(E(G))$ and $c^*=c(F^*)$, and let $\widehat\Sigma^*$ be the reduced covariance matrix defined in \eqref{eq:reduced-covariance}.  Then}
\[
\mathcal D(G,m\mathbf k)
\sim
\frac{\operatorname{cap}_{\mathbf k}(F^*)^m}
{(2\pi m)^{(N-c^*)/2}\sqrt{\det\widehat\Sigma^*}}.
\]
The proof also shows that balanced factors using every arc of $F^*$ contribute asymptotic proportion one.

The capacity is the exponential of the entropy of the unique matrix-scaled balanced flow.  The proof combines standard capacity and local-limit methods \cite{Gurvits2008,Elfving1980,Barvinok2009,BarvinokHartigan2010} with essential-support reduction and the graph-dependent flow lattice.  For complete dependency graphs, Vidunas already derived the equal-dimension asymptotic and studied unequal proportional growth for three and four players \cite[Section~6]{Vidunas2017}; the equal-dimension formula is also recalled in \cite[Remark~2.12]{AboPortakalSodomaco2026}.  Theorem~B gives a single support-dependent formulation for arbitrary fixed directed networks and positive integer strategy vectors, including supports whose feasible flows lie on a proper face.  Section~\ref{sec:known-asymptotics} recovers the classical diagonal constant explicitly, and Section~\ref{sec:sparse-example} illustrates the sparse case.

The structural results use classical transportation, Hall, and Boolean-lattice tools; the contribution is their joint application to the edge-marked degree, with explicit minimal supports and a first-order scaling law on the essential support.  Section~7 adds factorization over strongly connected components.  All counts concern generic isolated complex roots; reality, simplex feasibility, and dynamics require additional payoff-specific conditions.

\paragraph{Version history}
This manuscript substantially revises and supersedes \href{https://arxiv.org/abs/2604.17741v1}{arXiv:2604.17741v1}.  The previous version incorrectly claimed a termwise identification between weighted cycle-cover contributions and tropical intersection multiplicities.  That claim has been removed.  The present treatment builds on Datta's classical permanent formula and the BKK mixed-volume interpretation, distinguishing the factorial normalization arising from strategy-label overcounting from lattice intersection multiplicities.  The manuscript is reorganized around balanced factors, support structure, and proportional strategy growth.  The discussion of real roots and feasible totally mixed equilibria has also been corrected: the generic complex algebraic degree does not by itself determine either count.

\section{Network Games and Polynomial Systems}
\label{sec:network_games}

\subsection{The Network Game Model}
\label{sec:model}

Consider a finite game with player set \(\mathcal I=\{1,\ldots,N\}\).  Player \(i\) has pure-strategy set
\[
S_i=\{0,1,\ldots,m_i\},
\qquad
k_i=|S_i|-1.
\]
We assume throughout that $N\ge2$ and $k_i\ge1$ for every player.  A player with only one pure strategy contributes no indifference equation and can be removed before forming the dependency system.
The directed graph \(G_{\mathrm{player}}=(V,E)\) records payoff dependence: an arc \(j\to i\) indicates that the payoff of player \(i\) may depend on the strategy of player \(j\).  With
\[
\mathcal N_i=\{j:j\to i\in E\}\cup\{i\},
\qquad
B_i=\mathcal N_i\setminus\{i\},
\]
the utility function \(u_i\) depends only on the strategy profile indexed by \(\mathcal N_i\).  We allow an arbitrary local payoff table on \(\prod_{j\in\mathcal N_i}S_j\); in particular, the model is not restricted to pairwise-additive interactions.  For matrices indexed by players, we use the receiver--row convention
\[
(A_G)_{ij}=1\quad\Longleftrightarrow\quad j\to i\in E(G).
\]
This is the transpose of the source--row convention.  Since the permanent is invariant under transposition, the choice does not affect any degree or cycle-cover count below.

\subsection{Algebraic Formulation}
\label{sec:algebraic_formulation}

A mixed strategy of player \(i\) is written \(x_i=(x_{i,0},\ldots,x_{i,m_i})\).  Eliminating \(x_{i,0}\) through
\[
x_{i,0}=1-\sum_{j=1}^{k_i}x_{i,j}
\]
leaves \(k_i\) independent variables for player \(i\).

At a totally mixed Nash equilibrium, every player is indifferent among all pure strategies.  Relative to strategy \(0\), the indifference equations are
\begin{equation}
\label{eq:indifference}
f_{i,j}(\mathbf{x}) = u_i(j, \mathbf{x}_{-i}) - u_i(0, \mathbf{x}_{-i}) = 0, \quad 
\forall i \in \mathcal{I}, \forall j \in \{1, \dots, k_i\}.
\end{equation}
Here \(u_i(s,\mathbf x_{-i})\) is the expected payoff from pure strategy \(s\) against the mixed profile \(\mathbf x_{-i}\).

A real solution of \eqref{eq:indifference} is a totally mixed Nash equilibrium only if it lies in the relative interior of every player's probability simplex, namely
\[
x_{i,j}>0\quad (1\le j\le k_i),
\qquad
x_{i,0}:=1-\sum_{j=1}^{k_i}x_{i,j}>0
\quad (1\le i\le N).
\]
For a totally mixed profile the indifference equations make every pure strategy a best response, so these strict simplex inequalities are the remaining feasibility conditions.  A complex solution, or even a real solution outside these simplices, is not a Nash equilibrium.

The expected payoff differences are multilinear because players randomize independently: every monomial contains at most one probability variable from each player.  In particular, powers such as $x_{a,1}^2$ do not occur.

\subsection{Generic coefficient systems from payoff tensors}
\label{sec:generic-realization}

For player $i$, let
\[
\mathcal A_i=\prod_{q\in B_i}S_q
\]
be the set of pure action profiles of the other players in the local payoff table, and define the block-multiaffine polynomial space
\[
\mathcal P_i
=\bigotimes_{q\in B_i}
\operatorname{span}_{\mathbb C}\{1,x_{q,1},\ldots,x_{q,k_q}\}.
\]
The tensor product is interpreted as $\mathbb C$ when $B_i$ is empty.

\begin{proposition}
\label{prop:generic-realization}
After eliminating the baseline probabilities, the linear map from the complexified local payoff tables to the coefficient tuple
\[
\bigl(f_{i,s}\bigr)_{1\le i\le N,\ 1\le s\le k_i}
\in
\prod_{i=1}^N\mathcal P_i^{k_i}
\]
is surjective.  Consequently, every nonempty Zariski-open set of coefficient systems with the prescribed network supports has a nonempty Zariski-open preimage in payoff space.
\end{proposition}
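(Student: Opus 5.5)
The plan is to exploit the product structure: the map factors over players and over strategies, so surjectivity reduces to one local statement. For fixed $i$ and $s\in\{1,\ldots,k_i\}$, the polynomial $f_{i,s}$ depends only on the two rows $u_i(s,\cdot)$ and $u_i(0,\cdot)$ of player $i$'s local table. Distinct pairs $(i,s)$ use disjoint rows $u_i(s,\cdot)$, with $s\ge1$. Hence it suffices to show the following: for each $i$, the linear map
\[
T_i:\ \mathbb C^{\mathcal A_i}\to\mathcal P_i,\qquad
v\mapsto \sum_{\alpha\in\mathcal A_i} v(\alpha)\prod_{q\in B_i} x_{q,\alpha_q},
\qquad x_{q,0}:=1-\sum_{t=1}^{k_q}x_{q,t},
\]
is surjective, indeed bijective. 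Given this, I set $u_i(0,\cdot)=0$ and choose each row $u_i(s,\cdot)=T_i^{-1}(g_{i,s})$ for a prescribed target $g_{i,s}\in\mathcal P_i$. This realizes any tuple in $\prod_i\mathcal P_i^{k_i}$. The fact that $f_{i,s}=T_i(u_i(s,\cdot))-T_i(u_i(0,\cdot))$ is immediate from multilinearity of expected payoff.

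For bijectivity of $T_i$, I will use the tensor structure. Both $\mathbb C^{\mathcal A_i}=\bigotimes_{q\in B_i}\mathbb C^{S_q}$ and $\mathcal P_i=\bigotimes_q \operatorname{span}\{1,x_{q,1},\ldots,x_{q,k_q}\}$ are tensor products, and $T_i=\bigotimes_q T_q$. Here $T_q$ sends $e_0\mapsto 1-\sum_t x_{q,t}$ and $e_t\mapsto x_{q,t}$. Each $T_q$ is a linear map between spaces of dimension $k_q+1$, and it is invertible: its inverse sends $x_{q,t}\mapsto e_t$ and $1\mapsto \sum_{t=0}^{k_q} e_t$. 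A tensor product of isomorphisms is an isomorphism. The case $B_i=\emptyset$ is trivial, with both sides equal to $\mathbb C$. One point needs care: $\mathcal P_i$ must be identified with the abstract tensor product. That is, products of the block bases are linearly independent as polynomials. This holds because the variable blocks are disjoint, so the monomials $\prod_q x_{q,t_q}$ (with $x_{q,0}$ read as $1$) are distinct.

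For the consequence, the map $\Lambda$ from payoff space to coefficient space is a surjective linear map of finite-dimensional complex vector spaces, and hence continuous in the Zariski topology. For a nonempty Zariski-open set $U$, the preimage $\Lambda^{-1}(U)$ is therefore Zariski open. It is nonempty because $\Lambda$ is surjective. If $U$ is open only in the subspace of systems with the prescribed supports, I note that this subspace is exactly $\prod_i\mathcal P_i^{k_i}$, so no restriction arises.

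I expect the only real obstacle to be bookkeeping. Specifically, I must confirm that the dependency supports in $\mathcal P_i$ match exactly what payoff tables generate, including the baseline substitution. The tensor factorization settles this.
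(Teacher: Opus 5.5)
Your proof is correct and follows essentially the same route as the paper. The paper likewise sets $u_i(0,\cdot)=0$ and uses the fact that the products over $q\in B_i$ of the affine forms $1-\sum_t x_{q,t},x_{q,1},\ldots,x_{q,k_q}$ form a basis of $\mathcal P_i$; your tensor product of the invertible maps $T_q$ is exactly this basis statement, and your preimage argument for the Zariski-open consequence is the paper's.
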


\begin{proof}
For $q\in B_i$, the affine forms
\[
x_{q,0}=1-\sum_{t=1}^{k_q}x_{q,t},
\qquad x_{q,1},\ldots,x_{q,k_q},
\]
form a basis of $\operatorname{span}_{\mathbb C}\{1,x_{q,1},\ldots,x_{q,k_q}\}$.  Their products over $q\in B_i$ therefore form a basis of $\mathcal P_i$.

Fix arbitrary coefficients $c_{i,s,\alpha}\in\mathbb C$, where $\alpha\in\mathcal A_i$.  Choose the local pure payoffs so that
\[
u_i(0,\alpha)=0,
\qquad
u_i(s,\alpha)=c_{i,s,\alpha}
\quad(1\le s\le k_i).
\]
Then the expected payoff difference is
\[
f_{i,s}(x)
=\sum_{\alpha\in\mathcal A_i}
c_{i,s,\alpha}
\prod_{q\in B_i}x_{q,\alpha_q}.
\]
Thus every element of every $\mathcal P_i^{k_i}$ is realized, independently across players and equations.  The payoff-to-coefficient map is therefore onto.  The inverse image of a nonempty Zariski-open set under a surjective linear map is again nonempty and Zariski open, which proves the final assertion.  Since the map and the BKK nondegeneracy conditions are defined over $\mathbb R$, this preimage also contains a Euclidean open dense subset of the real payoff space.
\end{proof}

\section{The Classical Permanent Formula}
\label{sec:datta}

Let
\[
\Omega=\{(i,s):1\le i\le N,\ 1\le s\le k_i\},
\qquad d=|\Omega|=\sum_{i=1}^N k_i.
\]
Both the equations and the independent variables are indexed by \(\Omega\).

\subsection{The Polynomial Graph}
\label{sec:polynomial_graph}

\begin{definition}
\label{def:structure_matrix}
The \emph{polynomial matrix} \(A_{\mathrm{poly}}\in\{0,1\}^{d\times d}\) is the equation--variable incidence matrix
\[
 (A_{\mathrm{poly}})_{(i,s),(j,t)}=
\begin{cases} 
 1,&\text{if }x_{j,t}\text{ appears in }f_{i,s},\\
 0 & \text{otherwise.}
\end{cases}
\]
After identifying the row and column label sets with \(\Omega\), this matrix is the adjacency matrix of the directed \emph{polynomial graph} \(G_{\mathrm{poly}}\): an arc \((i,s)\to(j,t)\) records the corresponding incidence.
\end{definition}

For a \(d\times d\) matrix \(B\), its permanent is
\[
\perm(B)=\sum_{\sigma\in S_d}\prod_{r\in\Omega}B_{r,\sigma(r)}.
\]
For a zero--one adjacency matrix, every nonzero term is a directed cycle cover: a spanning collection of vertex-disjoint directed cycles, equivalently a permutation \(\sigma\) for which every selected arc \(r\to\sigma(r)\) exists.

Define the column-scaled matrix
\[
M_{(i,s),(j,t)}=
\frac{(A_{\mathrm{poly}})_{(i,s),(j,t)}}{(k_j!)^{1/k_j}}.
\]
Every permanent term uses every column exactly once.  Since player \(j\) contributes \(k_j\) columns, column scaling gives the identity
\begin{equation}
\label{eq:weighted-permanent}
\perm(M)=\frac{\perm(A_{\mathrm{poly}})}{\prod_{j=1}^N k_j!}.
\end{equation}

For full normal-form support, the permanent count is equivalent to the constrained-partition formula of McKelvey and McLennan \cite{McKelveyMcLennan1997}.  Datta's formulation incorporates prescribed structural zeros and the corresponding sparse polynomial systems \cite{Datta2003,DattaPolynomialGraphs2006,Datta2010}.  In the present notation the result is as follows.
\begin{theorem}
\label{thm:Datta}
For payoff coefficients in a Zariski-open dense subset of the coefficient space with the prescribed supports, the number \(\mathcal D\) of isolated solutions of \eqref{eq:indifference} in \((\mathbb C^*)^d\), counted with multiplicity, is
\begin{equation}
\label{eq:datta_formula}
\mathcal D
=\frac{\perm(A_{\mathrm{poly}})}{\prod_{j=1}^N k_j!}
=\perm(M).
\end{equation}
\end{theorem}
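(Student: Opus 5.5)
The plan is to compute the BKK root count of the indifference system directly through its multihomogeneous structure, then show it equals the scaled permanent.

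\textbf{Multiaffine structure.} Group the variables into player blocks $x_j=(x_{j,1},\ldots,x_{j,k_j})$. By \cref{prop:generic-realization}, each $f_{i,s}$ is a generic element of $\mathcal P_i$. It is therefore affine in every block $x_q$ with $q\in B_i$, and it does not involve $x_i$ or any block outside $B_i$. The Newton polytope of every $f_{i,s}$ is the product of standard simplices
\[
P_i=\prod_{q\in B_i}\Delta_{k_q},
\]
the same for all $s$. Bernstein's theorem \cite{Bernshtein1975,HuberSturmfels1995} then gives, for generic coefficients, the count
\[
\mathcal D=\mathrm{MV}(P_1^{(k_1)},\ldots,P_N^{(k_N)}),
\]
where $P_i$ is repeated $k_i$ times.

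\textbf{Evaluating the mixed volume.} Write $P_i=\sum_{q\in B_i}\Delta^{(q)}$ as a Minkowski sum of simplices lying in coordinate blocks. Multilinearity of the mixed volume expands $\mathcal D$ as a sum over assignments $(i,s)\mapsto q\in B_i$. A term
\[
\mathrm{MV}\bigl(\Delta^{(q(1))},\ldots\bigr)
\]
with simplices in complementary blocks vanishes unless exactly $k_q$ equations are assigned to each block $q$. In that case it equals $\prod_q \mathrm{MV}(\Delta_{k_q},\ldots,\Delta_{k_q})=1$, by the product decomposition of mixed volume for block-diagonal configurations, and the standard simplex has normalized volume $1$. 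Hence $\mathcal D$ is the number of maps $\tau:\Omega\to\{1,\ldots,N\}$ such that $\tau(i,s)\in B_i$ and $|\tau^{-1}(q)|=k_q$ for every $q$.

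Equivalently, one can use the multihomogeneous B\'ezout count on $\prod_j\mathbb P^{k_j}$: the number is the coefficient of $\prod_j z_j^{k_j}$ in $\prod_i(\sum_{j\in B_i}z_j)^{k_i}$. Generic multiaffine systems attain this count in the torus because the supports are full products of simplices, so no solutions escape to the toric boundary.

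\textbf{Matching the permanent.} Since $x_{j,t}$ appears in $f_{i,s}$ exactly when $j\in B_i$, the matrix $A_{\mathrm{poly}}$ has the block form $J_{k_i\times k_j}$ whenever $j\to i\in E$. A permutation $\sigma$ contributing to $\perm(A_{\mathrm{poly}})$ is a bijection $\Omega\to\Omega$ with $\sigma(i,s)=(j,t)$ and $j\in B_i$. Forgetting the strategy label $t$ gives an assignment $\tau$ of the type counted above. Conversely, each such $\tau$ lifts to exactly $\prod_j k_j!$ bijections, one for each ordering of the $k_j$ equations sent to block $j$ onto its columns. This gives $\perm(A_{\mathrm{poly}})=\mathcal D\prod_j k_j!$, and \eqref{eq:weighted-permanent} supplies the second equality.

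\textbf{Main obstacle.} The delicate step is genericity: the coefficient space is constrained by the network supports, and one must verify that BKK applies. Bernstein's count is exact once the facial subsystems have no torus roots, and this holds for generic coefficients with fixed supports, which \cref{prop:generic-realization} shows are realizable. I would also check the vanishing claim in the mixed-volume expansion via the dimension criterion: a family of polytopes has positive mixed volume iff every subfamily spans sufficient dimension, so an assignment overloading some block $q$ is degenerate.
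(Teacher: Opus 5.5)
Your proposal is correct and follows essentially the paper's route, which combines Bernstein's theorem with \cref{prop:balanced-assignments}: generic coefficients are realizable by \cref{prop:generic-realization}, the mixed volume of the Minkowski sums $\sum_{j\in B_i}\Delta_{k_j}$ is expanded multilinearly over admissible block assignments, unbalanced terms vanish and balanced ones contribute one, and each balanced assignment lifts to exactly $\prod_j k_j!$ permutations. The only cosmetic difference is that you justify the vanishing of unbalanced terms by the dimension criterion for mixed volumes, whereas the paper extracts the coefficient of $\prod_r\lambda_r$ from the factored volume polynomial $\prod_j(\sum_{r\in R_j}\lambda_r)^{k_j}/k_j!$.
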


\begin{corollary}
\label{cor:cycle_covers}
The number of directed cycle covers of \(G_{\mathrm{poly}}\) is \(\perm(A_{\mathrm{poly}})\).  Consequently, \(\mathcal D\) is this labeled cycle-cover count divided by \(\prod_jk_j!\).
\end{corollary}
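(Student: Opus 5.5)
The plan is to unwind the definition of the permanent of a zero--one matrix and then combine the result with Theorem~\ref{thm:Datta}.

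First, for any $B\in\{0,1\}^{d\times d}$, each summand $\prod_{r\in\Omega}B_{r,\sigma(r)}$ of $\perm(B)$ is a product of zeros and ones. It equals $1$ exactly when $B_{r,\sigma(r)}=1$ for every $r\in\Omega$, and equals $0$ otherwise. So $\perm(A_{\mathrm{poly}})$ is the number of permutations $\sigma\in S_d$ of $\Omega$ such that every pair $(r,\sigma(r))$ is an arc of $G_{\mathrm{poly}}$. By Definition~\ref{def:structure_matrix}, these pairs are precisely the arcs of $G_{\mathrm{poly}}$ under the identification of row and column labels with $\Omega$.

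Second, I will set up a bijection between such permutations and directed cycle covers of $G_{\mathrm{poly}}$. In one direction, decompose $\sigma$ into disjoint cycles. Each cycle $r\to\sigma(r)\to\sigma^2(r)\to\cdots$ uses only existing arcs. The cycles are vertex-disjoint and together cover $\Omega$, so they form a spanning cycle cover. In the other direction, a spanning collection of vertex-disjoint directed cycles assigns to each vertex $r$ its unique out-neighbour $\sigma(r)$ on the cycle through $r$. Because every vertex also has exactly one in-neighbour, $\sigma$ is a bijection, hence a permutation, and all of its arcs exist. The two constructions are mutually inverse, which proves the first assertion.

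The main point needing care is the convention for fixed points and loops. A fixed point $\sigma(r)=r$ corresponds to a loop, that is, a diagonal entry of $A_{\mathrm{poly}}$. Multilinearity (each $f_{i,s}$ involves no variable of player $i$) forces these diagonal entries to vanish. So all counted cycles have length at least $2$, and the count agrees with either convention on whether loops are allowed.

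Finally, for the ``consequently'' clause, substitute the count $\perm(A_{\mathrm{poly}})$ into \eqref{eq:datta_formula} of Theorem~\ref{thm:Datta}. This gives $\mathcal D=\perm(A_{\mathrm{poly}})/\prod_j k_j!$, which is the labeled cycle-cover count divided by $\prod_j k_j!$. Equivalently, one can use \eqref{eq:weighted-permanent}.
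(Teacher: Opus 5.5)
Your proposal is correct and follows the paper's own reasoning. The paper identifies the nonzero terms of the permanent of a zero--one adjacency matrix with permutations all of whose arcs $r\to\sigma(r)$ exist, i.e.\ directed cycle covers, and then reads off the second assertion from \eqref{eq:datta_formula} in Theorem~\ref{thm:Datta}. Your remark on loops is correct and harmless, but the reason the diagonal of $A_{\mathrm{poly}}$ vanishes is that $f_{i,s}$ depends only on $\mathbf x_{-i}$, not multilinearity as such.
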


The normalization of \(M\) in \eqref{eq:weighted-permanent} is induced by column scaling.  In the mixed-volume calculation below, the standard simplex \(\Delta_k=\conv(0,e_1,\ldots,e_k)\) has Euclidean volume \(1/k!\), while its normalized lattice volume is \(1\).

\section{The Player-Level Degree Identity}
\label{sec:mixed-volume}

\subsection{From mixed volume to balanced assignments}

The block expansion of the mixed volume underlying \cref{thm:Datta} is derived below.

For each player \(j\), let \(V_j\cong\mathbb R^{k_j}\) be the coordinate subspace spanned by that player's independent variables, and let
\[
\Delta_{k_j}=\conv(0,e_{j,1},\ldots,e_{j,k_j})\subset V_j.
\]
Thus \(\mathbb R^d=\bigoplus_{j=1}^NV_j\).  We use the BKK convention in which
\(MV(Q_1,\ldots,Q_d)\) is the coefficient of \(\lambda_1\cdots\lambda_d\) in
\(\operatorname{vol}_d(\lambda_1Q_1+\cdots+\lambda_dQ_d)\).  With this convention,
\[
MV(\underbrace{\Delta_k,\ldots,\Delta_k}_{k\text{ copies}})=1.
\]

\begin{proposition}
\label{prop:network-polytope}
For the prescribed generic support, the Newton polytope of the equation \(f_{i,s}\) is
\begin{equation}
\label{eq:newton-minkowski}
P_{i,s}=\sum_{j\in\mathcal N_i\setminus\{i\}}\Delta_{k_j},
\end{equation}
where the summands lie in their respective coordinate subspaces \(V_j\).
\end{proposition}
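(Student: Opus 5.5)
The plan is to compute the support of $f_{i,s}$ explicitly and then take its convex hull. By \cref{prop:generic-realization} and its proof, $f_{i,s}$ is an arbitrary element of $\mathcal P_i=\bigotimes_{q\in B_i}\operatorname{span}_{\mathbb C}\{1,x_{q,1},\ldots,x_{q,k_q}\}$, written in the basis of products $\prod_{q\in B_i}x_{q,\alpha_q}$ with $x_{q,0}=1-\sum_t x_{q,t}$. The first step is to expand this in the monomial basis $\prod_{q\in B_i}y_q$, where each $y_q\in\{1,x_{q,1},\ldots,x_{q,k_q}\}$. The change of basis is invertible because it is a tensor product of invertible $(k_q+1)\times(k_q+1)$ matrices. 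Hence a generic element of $\mathcal P_i$ has every such monomial with a nonzero coefficient, since each coefficient is a nonzero linear form on the coefficient space and so does not vanish on a Zariski-open dense set. Monomials outside this set never occur: there are no variables of player $i$ (each $f_{i,s}$ is a difference of payoffs at fixed own strategy), no variables of players outside $\mathcal N_i$, and no products of two variables from the same player, by the multiaffinity observed in Section~\ref{sec:algebraic_formulation}.

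The support of a generic $f_{i,s}$ is therefore exactly
\[
\Bigl\{\textstyle\sum_{q\in B_i}v_q : v_q\in\{0,e_{q,1},\ldots,e_{q,k_q}\}\Bigr\}
=\prod_{q\in B_i}\{0,e_{q,1},\ldots,e_{q,k_q}\}\subset\bigoplus_{q\in B_i}V_q .
\]
This is the Cartesian sum $\sum_{q\in B_i}\operatorname{vert}(\Delta_{k_q})$ of the vertex sets. The last step is the standard fact that the convex hull of a Minkowski sum of finite sets equals the Minkowski sum of their convex hulls. The Newton polytope is then $\sum_{q\in B_i}\Delta_{k_q}$, which is \eqref{eq:newton-minkowski}. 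Since the summands lie in independent coordinate subspaces, this sum is in fact the product $\prod_{q\in B_i}\Delta_{k_q}$. The case $B_i=\emptyset$ gives a nonzero constant, whose Newton polytope is the point $\{0\}$, consistent with an empty sum.

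The only real care needed is the genericity claim, namely that no monomial coefficient vanishes identically. I would handle it by the invertibility argument above: each monomial coefficient is a coordinate in an invertible linear change of coordinates on $\mathcal P_i$, so it is a nonzero linear functional. The intersection over finitely many monomials and equations of the loci where these functionals are nonzero is Zariski-open dense. By \cref{prop:generic-realization} this open set pulls back to a nonempty Zariski-open set of payoff tables.
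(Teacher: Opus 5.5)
Your proof is correct and follows the paper's argument. You identify the generic exponent set as the Minkowski sum of the blockwise sets $\{0,e_{j,1},\ldots,e_{j,k_j}\}$, use that convex hull commutes with Minkowski sums, and add an explicit check that every monomial coefficient is a nonzero linear functional on $\mathcal P_i$; that check fills in the full-support genericity step the paper's one-line proof leaves implicit.
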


\begin{proof}
After substituting \(x_{j,0}=1-\sum_{t=1}^{k_j}x_{j,t}\), a generic multilinear payoff difference has, for each neighboring player \(j\), exponents in \(\{0,e_{j,1},\ldots,e_{j,k_j}\}\).  Its exponent set is the sum of these blockwise exponent sets.  Taking convex hulls and using
\(\conv(S_1+\cdots+S_m)=\conv(S_1)+\cdots+\conv(S_m)\) gives \eqref{eq:newton-minkowski}.
\end{proof}

Recall that, for each player \(i\),
\[
B_i(G)=\mathcal N_i\setminus\{i\}
\qquad\text{and}\qquad
L_i^G(z)=\sum_{j\in B_i(G)}z_j.
\]
Thus \(B_i(G)\) is the set of player-variable blocks that may occur in an equation of player \(i\); an empty sum is understood as zero.  For a row \(r=(i,s)\in\Omega\), an \emph{admissible block assignment} is a map
\[
\phi:\Omega\longrightarrow\{1,\ldots,N\},
\qquad \phi(i,s)\in B_i(G).
\]
It is \emph{balanced} if \(|\phi^{-1}(j)|=k_j\) for every player \(j\).

For a polynomial \(F(z_1,\ldots,z_N)\), we write \([z^{\mathbf k}]F\) for the coefficient of \(z_1^{k_1}\cdots z_N^{k_N}\).

\begin{proposition}
\label{prop:balanced-assignments}
The BKK mixed volume of the indifference system is the number of admissible balanced block assignments.  More precisely,
\begin{equation}
\label{eq:main-permanent}
\begin{aligned}
\mathcal D(G,\mathbf k)
&=MV\bigl((P_r)_{r\in\Omega}\bigr)\\
&=[z^{\mathbf k}]\prod_{i=1}^N\bigl(L_i^G(z)\bigr)^{k_i}\\
&=\#\{\text{admissible balanced block assignments}\}\\
&=\frac{\perm(A_{\mathrm{poly}})}{\prod_{j=1}^Nk_j!}
=\perm(M).
\end{aligned}
\end{equation}
\end{proposition}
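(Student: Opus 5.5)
We prove the chain of equalities in \eqref{eq:main-permanent} from left to right, using multilinearity of the mixed volume and the block-diagonal structure of the Newton polytopes. The first equality, $\mathcal D(G,\mathbf k)=MV((P_r)_{r\in\Omega})$, is Bernstein's theorem applied to the generic system. The supports are those of \cref{prop:network-polytope}. Genericity within the prescribed supports is attainable from payoff tables by \cref{prop:generic-realization}, and \cref{thm:Datta} is consistent with this count.

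For the second equality we use the Minkowski decomposition $P_{i,s}=\sum_{j\in B_i(G)}\Delta_{k_j}$, with $\Delta_{k_j}\subset V_j$. Multilinearity of $MV$ in each argument expands the mixed volume as a sum over maps $\phi:\Omega\to\{1,\dots,N\}$ with $\phi(i,s)\in B_i(G)$, that is, over admissible block assignments:
\[
MV\bigl((P_r)_r\bigr)=\sum_{\phi}MV\bigl((\Delta_{k_{\phi(r)}})_{r\in\Omega}\bigr).
\]
The key lemma is a block-product evaluation for the summands. If $|\phi^{-1}(j)|=k_j$ for every $j$, the summand equals $\prod_j MV(\Delta_{k_j},\dots,\Delta_{k_j})=1$. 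Otherwise it vanishes. To prove this we use the standard criterion: $MV(Q_1,\dots,Q_d)=0$ whenever some $t$ of the bodies span a space of dimension less than $t$. If some player $j$ has $|\phi^{-1}(j)|>k_j$, then these many polytopes lie in the $k_j$-dimensional space $V_j$, so the summand vanishes. Since $\sum_j|\phi^{-1}(j)|=d=\sum_jk_j$, a non-balanced $\phi$ always has some overloaded block, so only balanced assignments survive.

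In the balanced case we compute the volume directly. The polynomial $\operatorname{vol}_d\bigl(\sum_r\lambda_r\Delta_{k_{\phi(r)}}\bigr)$ factors as $\prod_j\operatorname{vol}_{k_j}\bigl(\sum_{r\in\phi^{-1}(j)}\lambda_r\Delta_{k_j}\bigr)=\prod_j\bigl(\sum_{r\in\phi^{-1}(j)}\lambda_r\bigr)^{k_j}/k_j!$, because the sum is a direct product of sets lying in complementary subspaces. Its coefficient of $\prod_r\lambda_r$ is $\prod_j k_j!/k_j!=1$, matching the normalization fixed before the proposition. This shows that the mixed volume equals the number of admissible balanced block assignments, which is the third expression.

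The coefficient formula then follows by expanding $\prod_{i}\prod_{s=1}^{k_i}L_i^G(z)$. Choosing a term $z_{\phi(i,s)}$ from each factor is exactly an admissible $\phi$, and its monomial is $z^{\mathbf k}$ if and only if $\phi$ is balanced. So $[z^{\mathbf k}]\prod_i(L_i^G)^{k_i}$ counts balanced assignments.

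For the permanent expression we use the fact that $(A_{\mathrm{poly}})_{(i,s),(j,t)}=1$ exactly when $j\in B_i(G)$, for generic supports. A permutation $\sigma$ contributing to $\perm(A_{\mathrm{poly}})$ is therefore a balanced admissible $\phi=\pi_1\circ\sigma$, where $\pi_1$ is projection to the player index, together with a bijection $\phi^{-1}(j)\to\{(j,1),\dots,(j,k_j)\}$ for each $j$. There are $\prod_jk_j!$ such bijections, so dividing by $\prod_jk_j!$ gives the count of balanced assignments. The final equality $=\perm(M)$ is \eqref{eq:weighted-permanent}.

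The main obstacle is the vanishing and factorization step for the block mixed volumes. We plan to handle it with the dimension criterion together with the explicit product-volume computation above, and we must check carefully that this matches the stated BKK normalization.
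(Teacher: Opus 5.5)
Your proposal is correct and follows essentially the same route as the paper. You expand the mixed volume multilinearly over admissible assignments, factor the volume polynomial over the complementary $V_j$ so that $k_j!$ cancels $1/k_j!$, extract the coefficient directly, and count $\prod_j k_j!$ labeled refinements to reach the permanent. The only difference is that you use a dimension criterion for the vanishing of unbalanced summands, whereas the paper reads that vanishing off the same product formula; since that formula holds for every $\phi$, the detour is valid but unnecessary.
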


\begin{proof}
Expand \(\prod_i(L_i^G)^{k_i}\) by viewing its \(k_i\) copies of \(L_i^G\) as the labeled equation rows \((i,1),\ldots,(i,k_i)\).  Choosing \(z_j\) from the factor associated with row \((i,s)\) is the same as assigning that row to block \(j\in B_i(G)\).  The exponent of \(z_j\) records the number of rows assigned to block \(j\).  Hence \([z^{\mathbf k}]\prod_i(L_i^G)^{k_i}\) counts exactly the admissible balanced assignments.

Multilinearity of mixed volume and \eqref{eq:newton-minkowski} give
\[
MV\bigl((P_r)_{r\in\Omega}\bigr)
=\sum_{\substack{\phi:\Omega\to\{1,\ldots,N\}\\
\phi(i,s)\in B_i(G)\ \forall(i,s)\in\Omega}}
MV\bigl((\Delta_{k_{\phi(r)}})_{r\in\Omega}\bigr).
\]
Fix \(\phi\) and put \(R_j=\phi^{-1}(j)\).  Because the spaces \(V_j\) are complementary, the relevant volume polynomial factors as
\[
\operatorname{vol}_d\!\left(\sum_{r\in\Omega}\lambda_r\Delta_{k_{\phi(r)}}\right)
=\prod_{j=1}^N
\frac{\left(\sum_{r\in R_j}\lambda_r\right)^{k_j}}{k_j!}.
\]
The coefficient of \(\prod_{r\in\Omega}\lambda_r\) is zero unless \(|R_j|=k_j\) for every \(j\).  Under these balance conditions, the multinomial coefficient \(k_j!\) cancels the Euclidean simplex factor \(1/k_j!\) in every block, so the coefficient is one.  This proves the first equality in \eqref{eq:main-permanent}.

To obtain the permanent, forget the second label in a variable column: a permutation selected by \(A_{\mathrm{poly}}\) sends every row \(r\) to a column \((j,t)\) and hence induces \(\phi(r)=j\).  Because all columns are used once, this assignment is balanced.  Conversely, for a fixed balanced assignment, the \(k_j\) rows assigned to player \(j\) can be bijected with the labeled columns \((j,1),\ldots,(j,k_j)\) in exactly \(k_j!\) ways.  Thus every balanced assignment has \(\prod_jk_j!\) labeled refinements, proving the remaining equalities.
\end{proof}

\begin{remark}
\label{rem:intersection-theoretic-interface}
Let $X_{\mathbf k}=\prod_j\mathbb P^{k_j}$ and let $H_j$ be the hyperplane class of its $j$th factor.  After multihomogenization, the payoff-difference system is a section of the rank-$d$ split bundle
\[
\mathcal E_G
=\bigoplus_{i=1}^N
\mathcal O_{X_{\mathbf k}}\!\left(\sum_{j\in B_i(G)}H_j\right)^{\oplus k_i}.
\]
On the generic locus where its zero scheme is finite and avoids the coordinate boundary, its degree is the top Chern number
\[
\int_{X_{\mathbf k}}c_d(\mathcal E_G)
=[H_1^{k_1}\cdots H_N^{k_N}]
\prod_{i=1}^N\left(\sum_{j\in B_i(G)}H_j\right)^{k_i}.
\]
In $A^*(X_{\mathbf k})\otimes_{\mathbb Z}\mathbb Z[x]$, the edge-marked refinement is
\[
\Phi_{\mathbf k}(x)
=\int_{X_{\mathbf k}}
\prod_{i=1}^N\left(\sum_{j\ne i}x_{ji}H_j\right)^{k_i}.
\]
This connects the support formulation with the vector-bundle approach \cite{AboPortakalSodomaco2026}.  It is a global intersection-number identity; its summands are not identified with local multiplicities of individual mixed cells or tropical intersection points.
\end{remark}

\subsection{The network degree polynomial and balanced factors}
\label{sec:degree-polynomial}

Let
\[
\mathcal E_N=\{(j,i):1\le i,j\le N,\ j\ne i\}
\]
be the set of potential dependency arcs, where $(j,i)$ denotes $j\to i$.  Introduce variables $x=(x_{ji})_{(j,i)\in\mathcal E_N}$ and define
\begin{equation}
\label{eq:degree-polynomial}
\Phi_{\mathbf k}(x)
=[z^{\mathbf k}]
\prod_{i=1}^N\left(\sum_{j\ne i}x_{ji}z_j\right)^{k_i}.
\end{equation}
For an arc set $E\subseteq\mathcal E_N$, let $\mathbf 1_E$ be its zero--one incidence vector.

\begin{proposition}
\label{prop:degree-specialization}
For every directed dependency graph $G=([N],E)$,
\[
\mathcal D(G,\mathbf k)=\Phi_{\mathbf k}(\mathbf 1_E).
\]
\end{proposition}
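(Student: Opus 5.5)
The proof is a direct specialization argument built on \cref{prop:balanced-assignments}. That proposition already gives
\[
\mathcal D(G,\mathbf k)=[z^{\mathbf k}]\prod_{i=1}^N\bigl(L_i^G(z)\bigr)^{k_i},
\qquad
L_i^G(z)=\sum_{j\in B_i(G)}z_j .
\]
So it suffices to show that substituting $x=\mathbf 1_E$ into the product defining $\Phi_{\mathbf k}$ reproduces this product, and that the substitution commutes with taking the coefficient $[z^{\mathbf k}]$.

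The first step is to evaluate each linear form. Under $x=\mathbf 1_E$ we have $x_{ji}=1$ if $j\to i\in E$ and $x_{ji}=0$ otherwise. Hence
\[
\sum_{j\ne i}x_{ji}z_j\Big|_{x=\mathbf 1_E}=\sum_{j:\,j\to i\in E}z_j=L_i^G(z),
\]
because $B_i(G)=\mathcal N_i\setminus\{i\}=\{j:j\to i\in E\}$. Here we use that $G$ has no loops: $E\subseteq\mathcal E_N$ and $j\ne i$. When $B_i(G)$ is empty, both sides are zero, matching the paper's convention for empty sums.

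The second step is to justify the interchange. Regard $\prod_i(\sum_{j\ne i}x_{ji}z_j)^{k_i}$ as a polynomial in $z$ with coefficients in $\mathbb Z[x]$. Taking the coefficient of $z^{\mathbf k}$ is a $\mathbb Z[x]$-linear operation, and evaluating $x$ at a point is a ring homomorphism $\mathbb Z[x]\to\mathbb Z$ that fixes $z$. Therefore
\[
\Phi_{\mathbf k}(\mathbf 1_E)=[z^{\mathbf k}]\prod_i\bigl(L_i^G\bigr)^{k_i}.
\]
Combining this with \cref{prop:balanced-assignments} gives $\mathcal D(G,\mathbf k)$.

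Neither step is a real obstacle; the only points needing care are the empty-neighbourhood case, where both sides vanish consistently, and making sure the evaluation is taken after the coefficient is extracted, which is legitimate because evaluation is a ring homomorphism. All the geometric content (BKK and the generic realization of \cref{prop:generic-realization}) is already packaged in \cref{prop:balanced-assignments}.
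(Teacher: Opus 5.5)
Your proposal is correct and matches the paper's proof: specialize each linear form at $x=\mathbf 1_E$ to get $L_i^G(z)$, then invoke \eqref{eq:main-permanent} from \cref{prop:balanced-assignments}. The paper leaves implicit that evaluation commutes with coefficient extraction and that empty predecessor sets give zero on both sides, and your handling of both points is sound.
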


\begin{proof}
After the specialization $x=\mathbf 1_E$, the linear form in the $i$th factor of \eqref{eq:degree-polynomial} becomes $L_i^G(z)$.  The claim follows from \eqref{eq:main-permanent}.
\end{proof}

A \emph{balanced $\mathbf k$-factor} supported on $E$ is a family
\[
a=(a_{ji})_{(j,i)\in\mathcal E_N}\in\mathbb Z_{\ge0}^{\mathcal E_N}
\]
such that $a_{ji}=0$ for $(j,i)\notin E$ and
\begin{equation}
\label{eq:eulerian-margins}
\sum_{j\ne i}a_{ji}=k_i\quad(1\le i\le N),
\qquad
\sum_{i\ne j}a_{ji}=k_j\quad(1\le j\le N).
\end{equation}
Thus vertex $i$ has both in-degree and out-degree $k_i$ in the directed multigraph with arc multiplicities $a_{ji}$.  No connectivity condition is imposed.  Denote the set of these factors by $\mathfrak E_{\mathbf k}(E)$ and assign
\begin{equation}
\label{eq:eulerian-weight}
w_{\mathbf k}(a)
=\prod_{i=1}^N
\frac{k_i!}{\prod_{j\ne i}a_{ji}!}.
\end{equation}

\begin{theorem}
\label{thm:eulerian-factor}
The network degree polynomial is the generating polynomial of balanced $\mathbf k$-factors:
\begin{equation}
\label{eq:eulerian-generating}
\Phi_{\mathbf k}(x)
=\sum_{a\in\mathfrak E_{\mathbf k}(\mathcal E_N)}
w_{\mathbf k}(a)
\prod_{(j,i)\in\mathcal E_N}x_{ji}^{a_{ji}}.
\end{equation}
Consequently,
\begin{equation}
\label{eq:eulerian-degree}
\mathcal D(G,\mathbf k)
=\sum_{a\in\mathfrak E_{\mathbf k}(E(G))}w_{\mathbf k}(a).
\end{equation}
\end{theorem}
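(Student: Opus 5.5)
The plan is to expand each factor of \eqref{eq:degree-polynomial} by the multinomial theorem and then read off the coefficient of $z^{\mathbf k}$. For fixed $i$, the factor $\bigl(\sum_{j\ne i}x_{ji}z_j\bigr)^{k_i}$ equals
\[
\sum_{(a_{ji})_{j\ne i}}\frac{k_i!}{\prod_{j\ne i}a_{ji}!}\prod_{j\ne i}x_{ji}^{a_{ji}}z_j^{a_{ji}},
\]
where the sum runs over nonnegative integer vectors $(a_{ji})_{j\ne i}$ with $\sum_{j\ne i}a_{ji}=k_i$. This is exactly the in-degree condition in \eqref{eq:eulerian-margins} at vertex $i$.

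Next, multiply over $i=1,\ldots,N$. A term of the product is indexed by a family $a=(a_{ji})\in\mathbb Z_{\ge0}^{\mathcal E_N}$ satisfying all the in-degree conditions, with coefficient $\prod_i k_i!/\prod_{j\ne i}a_{ji}!=w_{\mathbf k}(a)$ and monomial $\prod_{(j,i)}x_{ji}^{a_{ji}}\cdot\prod_j z_j^{\sum_{i\ne j}a_{ji}}$. Distinct families $a$ produce distinct $x$-monomials, so no two terms merge, because the $x_{ji}$ are independent variables. Extracting $[z^{\mathbf k}]$ keeps exactly the families with $\sum_{i\ne j}a_{ji}=k_j$ for every $j$. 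These are the out-degree conditions, so the surviving terms are precisely the $a\in\mathfrak E_{\mathbf k}(\mathcal E_N)$, and this gives \eqref{eq:eulerian-generating}.

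For \eqref{eq:eulerian-degree}, specialize $x=\mathbf 1_{E(G)}$ and invoke \cref{prop:degree-specialization}. The monomial $\prod x_{ji}^{a_{ji}}$ evaluates to $1$ if $a$ is supported on $E(G)$ and to $0$ otherwise, the latter because some factor $0^{a_{ji}}$ with $a_{ji}\ge1$ appears. Hence the sum collapses to $\sum_{a\in\mathfrak E_{\mathbf k}(E(G))}w_{\mathbf k}(a)$.

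There is no real obstacle here. The only point needing care is the bookkeeping that the $z$-exponent of $z_j$ collects the out-multiplicities $a_{ji}$ over all receivers $i$, while the multinomial index for factor $i$ collects the in-multiplicities. This distinction is what turns the coefficient extraction into the two-sided balance condition. As a consistency check, one may compare with the labeled-row count of \cref{prop:balanced-assignments}: grouping the admissible balanced assignments $\phi$ by $a_{ji}=|\phi^{-1}(j)\cap\{(i,s)\}_s|$, the number of assignments with a given $a$ is the same product of multinomials $w_{\mathbf k}(a)$.
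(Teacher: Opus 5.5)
Your proposal is correct and follows essentially the same route as the paper: a multinomial expansion of each factor, which imposes the in-degree conditions, followed by extraction of $[z^{\mathbf k}]$, which imposes the out-degree conditions, and then specialization at $\mathbf 1_{E(G)}$. Your added remark that distinct families $a$ give distinct $x$-monomials, and the cross-check against the labeled balanced assignments of \cref{prop:balanced-assignments}, are harmless refinements that the paper leaves implicit.
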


\begin{proof}
For each player $i$, the multinomial theorem gives
\[
\left(\sum_{j\ne i}x_{ji}z_j\right)^{k_i}
=\sum_{\substack{a_{ji}\ge0\ (j\ne i)\\
\sum_{j\ne i}a_{ji}=k_i}}
\frac{k_i!}{\prod_{j\ne i}a_{ji}!}
\prod_{j\ne i}(x_{ji}z_j)^{a_{ji}}.
\]
The exponent of $z_j$ in the product over $i$ is $\sum_{i\ne j}a_{ji}$.  Extracting $z^{\mathbf k}$ therefore imposes the second family of equalities in \eqref{eq:eulerian-margins}; the first family is imposed within each multinomial expansion.  The remaining coefficient and monomial are exactly \eqref{eq:eulerian-weight} and \eqref{eq:eulerian-generating}.  Specializing to $\mathbf 1_{E(G)}$ removes precisely the factors using an arc outside $G$, which proves \eqref{eq:eulerian-degree}.
\end{proof}

The edge variables retain support information that is lost in the expanded zero--one permanent; the remaining results extract its computational, transportation, and scaling consequences.

\begin{proposition}
\label{prop:player-dp}
Put $K(\mathbf k)=\prod_{j=1}^N(k_j+1)$ and $d=\sum_i k_i$.  For a fixed support $E$, the coefficient $\mathcal D_{\mathbf k}(E)$ can be computed with
\[
O\!\left(K(\mathbf k)\sum_{i=1}^Nk_i|B_i|\right)
\]
arithmetic operations and $O(K(\mathbf k))$ memory.  A coarser bound for the operation count is
\[
O(NdK(\mathbf k)).
\]
\end{proposition}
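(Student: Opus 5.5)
We compute $\mathcal D_{\mathbf k}(E)=[z^{\mathbf k}]\prod_{i=1}^N L_i^G(z)^{k_i}$ from \cref{prop:balanced-assignments} and \cref{prop:degree-specialization} by multiplying the linear forms in one at a time and discarding every monomial that cannot contribute to $z^{\mathbf k}$. The ambient space is the truncated polynomial ring $\mathbb Z[z_1,\ldots,z_N]/(z_1^{k_1+1},\ldots,z_N^{k_N+1})$. A polynomial there is stored as a dense array indexed by exponent vectors $\mathbf e$ with $0\le e_j\le k_j$, so it has exactly $K(\mathbf k)$ entries. Truncation is harmless because every factor has nonnegative exponents. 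A monomial with some $e_j>k_j$ therefore never returns to a monomial dividing $z^{\mathbf k}$, and the ideal generated by the $z_j^{k_j+1}$ does not affect the target coefficient.

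The algorithm initializes $P_0=1$. It then processes the $d$ equation rows $(i,s)\in\Omega$ in any order, setting $P\leftarrow P\cdot L_i^G(z)$ modulo the truncation ideal. Concretely, for every stored exponent $\mathbf e$ and every $j\in B_i$ with $e_j<k_j$, it adds $P[\mathbf e]$ into the entry $\mathbf e+e_j$ of a fresh output array. After all $d$ rows the answer is the entry $P[\mathbf k]$. Correctness is exactly the counting in the proof of \cref{prop:balanced-assignments}. Each intermediate entry $P[\mathbf e]$ equals the number of admissible partial assignments of the processed rows whose block counts are $\mathbf e$, and only partial assignments exceeding some $k_j$ are dropped. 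Such assignments cannot be completed to balanced ones.

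For the cost, one multiplication by $L_i^G$ touches each of the $K(\mathbf k)$ entries with $|B_i|$ additions. Repeating this for the $k_i$ rows of player $i$ and summing over $i$ gives $O(K(\mathbf k)\sum_i k_i|B_i|)$ arithmetic operations. The exponent update $\mathbf e\mapsto\mathbf e+e_j$ must be $O(1)$. For this we use mixed-radix linear indexing, where index$(\mathbf e)=\sum_j e_j\prod_{l<j}(k_l+1)$, so adding $e_j$ is adding a precomputed stride. The check $e_j<k_j$ can be handled by iterating over the array in nested loops, or by precomputing digit information, both without extra asymptotic cost. For memory, we keep only two arrays of size $K(\mathbf k)$, the current one and the next one, or update in place by scanning in decreasing index order. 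This gives $O(K(\mathbf k))$ memory. The coarser bound follows from $|B_i|\le N-1$, which gives $\sum_ik_i|B_i|\le (N-1)d$.

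The main subtle point is bookkeeping rather than mathematics. We must justify unit-cost arithmetic, since the entries can be large integers, and the statement counts arithmetic operations, so this is acceptable. We must also make sure the index manipulation really costs $O(1)$ per update. A refinement is to use the fact that $P[\mathbf e]$ is nonzero only when $|\mathbf e|$ equals the number of rows processed. That would improve constants but is not needed for the stated bound.
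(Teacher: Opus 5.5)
Your proposal is correct and follows the paper's proof. Both multiply in the $k_i$ copies of each linear form $L_i^E$ one at a time, discard exponents exceeding $k_j$ so that at most $K(\mathbf k)$ coefficients are kept in two arrays, and charge $|B_i|$ updates per retained vector per copy, with $|B_i|\le N-1$ giving the coarser bound; your remarks on mixed-radix indexing and on why truncation cannot lose contributions to $z^{\mathbf k}$ only make explicit what the paper's short argument leaves implicit.
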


\begin{proof}
Multiply the linear forms $L_i^E$ one at a time, using $k_i$ copies of the $i$th form, and discard every monomial whose exponent in coordinate $j$ exceeds $k_j$.  There are at most $K(\mathbf k)$ retained exponent vectors, and multiplication by one copy of $L_i^E$ makes at most $|B_i|$ updates per retained vector.  Two coefficient arrays suffice for the computation.
\end{proof}

This dynamic program uses the player-level state space.  Ryser's formula on the expanded $d\times d$ incidence matrix uses $O(d2^d)$ arithmetic operations.  When $k_i=k$ for all $i$, the corresponding state counts are $(k+1)^N$ and $2^{Nk}$, respectively; which representation is preferable depends on the strategy format and sparsity.

We next record the resulting boundary between exact counting and fixed-player tractability.  To make the input model explicit, the positive integers $k_i$ are written in unary, so the expanded equation index set $\Omega$ has size polynomial in the input length.

\begin{theorem}
\label{thm:degree-complexity}
Given a loopless directed graph $G=([N],E)$ and a positive strategy vector $\mathbf k$, exact evaluation of $\mathcal D(G,\mathbf k)$ is $\#\mathrm P$-complete under polynomial-time Turing reductions, even when $\mathbf k=\mathbf 1$.  On the other hand, for every fixed $N$ it is computable in deterministic polynomial time in $d=\sum_i k_i$.  More precisely, the algorithm in \cref{prop:player-dp} uses at most
\begin{equation}
\label{eq:fixed-player-complexity}
O\!\left((N-1)d\left(1+\frac dN\right)^N\right)
\end{equation}
arithmetic operations and stores at most $\left(1+d/N\right)^N$ integers of $O(d\log N)$ bits.
\end{theorem}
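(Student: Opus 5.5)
We split the proof into the hardness claim for $\mathbf k=\mathbf 1$, membership in $\#\mathrm P$, and the fixed-$N$ algorithmic bound.

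\textbf{Hardness.} For $\mathbf k=\mathbf 1$, every weight $w_{\mathbf 1}(a)$ in \eqref{eq:eulerian-weight} equals $1$. A balanced $\mathbf 1$-factor supported on $E$ is a zero--one arc set in which every vertex has in-degree and out-degree one. Such a set is exactly a loopless cycle cover of $G$, i.e.\ a permutation $\sigma$ of $[N]$ with $\sigma(i)\to i\in E$ for all $i$. By \cref{thm:eulerian-factor},
\[
\mathcal D(G,\mathbf 1)=\perm(A_G),
\]
where $A_G$ is the receiver--row adjacency matrix, which has zero diagonal.

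Next we reduce Valiant's $\#\mathrm P$-complete problem, the permanent of an arbitrary $0$--$1$ matrix $B\in\{0,1\}^{n\times n}$, to this one. Form the bipartite digraph on $2n$ vertices $u_1,\dots,u_n,v_1,\dots,v_n$ with an arc $v_c\to u_r$ whenever $B_{rc}=1$. Add the arcs $u_r\to v_r$ for every $r$. This graph is loopless.

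In a cycle cover, each $v_c$ has a unique out-neighbour $u_r$, and each $u_r$ has the unique out-neighbour $v_r$. The successor map on the $v$'s is therefore $c\mapsto r$, and the in-degree condition on the $u$'s forces it to be a bijection. Hence the number of cycle covers is
\[
\sum_{\sigma}\prod_c B_{\sigma(c),c}=\perm(B).
\]
This is a parsimonious, hence Turing, reduction, so $\mathcal D(G,\mathbf 1)$ is $\#\mathrm P$-hard.

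\textbf{Membership in $\#\mathrm P$.} With $\mathbf k$ in unary, \cref{prop:balanced-assignments} shows that $\mathcal D(G,\mathbf k)$ counts admissible balanced block assignments $\phi:\Omega\to[N]$. Each assignment is a certificate of size polynomial in $d$, and admissibility and balance are checkable in polynomial time. So the general problem lies in $\#\mathrm P$, and is therefore $\#\mathrm P$-complete.

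\textbf{Fixed-$N$ algorithm.} We invoke \cref{prop:player-dp}, which uses
\[
O\bigl(K(\mathbf k)\textstyle\sum_i k_i|B_i|\bigr)
\]
operations. Since $G$ is loopless, $|B_i|\le N-1$, so $\sum_i k_i|B_i|\le (N-1)d$. By AM--GM,
\[
K(\mathbf k)=\prod_j(k_j+1)\le\Bigl(\tfrac1N\textstyle\sum_j(k_j+1)\Bigr)^N=(1+d/N)^N .
\]
Together these give \eqref{eq:fixed-player-complexity}. For fixed $N$ the bound is $O(d^{N+1})$, which is polynomial in $d$.

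For the bit sizes, every stored coefficient of a truncated partial product is nonnegative. It is bounded by the corresponding coefficient of the full product evaluated at $z=\mathbf 1$, and hence by
\[
\prod_i|B_i|^{k_i}\le (N-1)^d .
\]
Each stored integer therefore has $O(d\log N)$ bits, so the arithmetic operations also run in polynomial bit-time.

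The main point needing care is the hardness reduction: it must land inside loopless graphs, which excludes a simple diagonal embedding. The bipartite doubling above is what resolves this.
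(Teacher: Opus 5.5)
Your proposal is correct and follows the paper's proof essentially step for step. It uses the same $\#\mathrm P$ membership argument via balanced block assignments under unary encoding, the same loopless doubling (your digraph has receiver--row matrix $\bigl(\begin{smallmatrix}0&B\\ I_n&0\end{smallmatrix}\bigr)$, exactly the paper's construction) reducing from Valiant's permanent, and the same AM--GM bound on $K(\mathbf k)$. The differences are cosmetic: you obtain $\mathcal D(G,\mathbf 1)=\perm(A_G)$ from the balanced-factor expansion rather than the permanent identity, and you bound stored coefficients by $(N-1)^d$ instead of $N^d$; either bound yields $O(d\log N)$ bits.
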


\begin{proof}
By \cref{prop:balanced-assignments}, $\mathcal D(G,\mathbf k)$ is the number of admissible balanced block assignments of the $d$ labeled equation rows.  Under the stated unary encoding, such an assignment is a polynomial-size certificate, and its support and block-cardinality conditions can be checked in polynomial time.  Hence exact evaluation belongs to $\#\mathrm P$.

For hardness, let $A\in\{0,1\}^{n\times n}$ be arbitrary and form
\[
B=
\begin{pmatrix}
0&A\\
I_n&0
\end{pmatrix}.
\]
The matrix $B$ has zero diagonal and is therefore the receiver--row adjacency matrix of a loopless directed graph $G_B$ on $2n$ vertices.  Every nonzero permanent term of $B$ maps the first $n$ rows to the last $n$ columns through $A$, while the last $n$ rows are forced through $I_n$ to the first $n$ columns.  This gives a bijection between the permanent terms of $A$ and those of $B$, and hence
\[
\mathcal D(G_B,\mathbf 1)=\operatorname{perm}(B)=\operatorname{perm}(A).
\]
Since exact evaluation of the permanent of a zero--one matrix is $\#\mathrm P$-complete \cite{Valiant1979}, this count-preserving reduction proves hardness.

For the fixed-player assertion, the arithmetic bound in \cref{prop:player-dp}, the estimate $|B_i|\le N-1$, and the arithmetic--geometric mean inequality give
\[
\sum_i k_i|B_i|\le (N-1)d,
\qquad
K(\mathbf k)=\prod_i(k_i+1)
\le\left(\frac{d+N}{N}\right)^N.
\]
This proves \eqref{eq:fixed-player-complexity}.  At every intermediate stage, all coefficients are nonnegative and their sum is at most $N^d$, so each stored integer has $O(d\log N)$ bits.  Consequently, for fixed $N$ the same algorithm also runs in polynomial bit complexity.
\end{proof}

The theorem does not assert fixed-parameter tractability with parameter $N$: the exponent in \eqref{eq:fixed-player-complexity} depends on $N$.  If the strategy dimensions are supplied only in binary as a succinct input, the bound is pseudopolynomial in those encoded values.  Nor does the state-space inequality $K(\mathbf k)\le2^d$ imply uniform runtime dominance over permanent algorithms, since the polynomial update factors differ.  The gain is strongest when the player blocks have nontrivial strategy dimension; for $k_i=k$, the two state counts are $(k+1)^N$ and $(2^k)^N$.

\begin{corollary}
\label{cor:reversal-symmetry}
Let $E^{\mathsf T}=\{i\to j:j\to i\in E\}$ be the reversed dependency support.  Then
\[
\mathcal D_{\mathbf k}(E)=\mathcal D_{\mathbf k}(E^{\mathsf T}).
\]
\end{corollary}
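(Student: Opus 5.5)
The plan is to prove the corollary directly from the balanced-factor expansion in \cref{thm:eulerian-factor}, via an involution on balanced factors that reverses every arc. Write $\mathcal D_{\mathbf k}(E)=\Phi_{\mathbf k}(\mathbf 1_E)$, which by \eqref{eq:eulerian-degree} equals
\[
\mathcal D_{\mathbf k}(E)=\sum_{a\in\mathfrak E_{\mathbf k}(E)}w_{\mathbf k}(a).
\]
For a factor $a$, define the reversed family $a^{\mathsf T}$ by $(a^{\mathsf T})_{ij}=a_{ji}$. The support condition carries over directly: $a_{ji}=0$ off $E$ if and only if $(a^{\mathsf T})_{ij}=0$ off $E^{\mathsf T}$. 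The margin conditions \eqref{eq:eulerian-margins} require in-degree and out-degree $k_i$ at every vertex. Reversal swaps in- and out-degrees, and the two prescribed vectors coincide, so $a\mapsto a^{\mathsf T}$ is a bijection $\mathfrak E_{\mathbf k}(E)\to\mathfrak E_{\mathbf k}(E^{\mathsf T})$. It is an involution.

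Next I check that the weight is preserved. By \eqref{eq:eulerian-weight},
\[
w_{\mathbf k}(a)=\frac{\prod_i k_i!}{\prod_{(j,i)\in\mathcal E_N}a_{ji}!}.
\]
Here the denominator is the product of factorials over all arcs, grouped by receiver, and the numerator depends only on $\mathbf k$. Regrouping the same denominator by sender shows $w_{\mathbf k}(a^{\mathsf T})=w_{\mathbf k}(a)$. Summing over the bijection gives the identity.

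An alternative route is to note that the paper already observes $\perm$ is transpose-invariant. Reversing $E$ transposes $A_{\mathrm{poly}}$ only up to the relabeling of rows and columns by $\Omega$, because the row index $(i,s)$ and column index $(j,t)$ live in the same set. Then \cref{thm:Datta} would give the result directly. The factor argument is cleaner, though, so I would present that one.

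The only point needing care, and it is minor, is the regrouping step. It depends on the margins being equal in both directions, which is exactly what makes the weight symmetric; the multinomial coefficients are per-receiver only in appearance. There is no real obstacle.
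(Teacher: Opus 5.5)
Your proof is correct and follows the paper's argument. Reversal $a\mapsto a^{\mathsf T}$ bijects $\mathfrak E_{\mathbf k}(E)$ with $\mathfrak E_{\mathbf k}(E^{\mathsf T})$ because the prescribed in- and out-degrees coincide, and the weight is unchanged since $\prod_i k_i!$ depends only on $\mathbf k$ while the denominator is the product of $a_{ji}!$ over all arcs. (Your alternative via $A_{\mathrm{poly}}(E^{\mathsf T})=A_{\mathrm{poly}}(E)^{\mathsf T}$ and transpose-invariance of the permanent is also valid.)
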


\begin{proof}
Transposition sends a balanced factor $a$ on $E$ to a balanced factor $a^{\mathsf T}$ on $E^{\mathsf T}$.  It preserves the margins and the product of all factorial denominators in \eqref{eq:eulerian-weight}, and hence preserves the weight.
\end{proof}

Every player in a positive-degree graph lies on a directed cycle: a supported balanced factor has positive in- and out-degree at each vertex and decomposes into directed cycles.

When $k_i=1$ for every player, each nonzero $a_{ji}$ equals one, every weight in \eqref{eq:eulerian-weight} is one, and the balanced factors are the directed cycle covers of the player graph.  Thus \eqref{eq:eulerian-degree} recovers the usual permanent interpretation in the binary-strategy case.

For example, let $G$ be the complete loopless digraph on three vertices and let $\mathbf k=(2,2,2)$.  Its balanced factors are
\[
A(a)=
\begin{pmatrix}
0&a&2-a\\
2-a&0&a\\
a&2-a&0
\end{pmatrix},
\qquad a\in\{0,1,2\}.
\]
Their weights are $1,8,1$, respectively, and hence $\mathcal D(G,(2,2,2))=10$.

\subsection{Nonvanishing: a capacitated Hall criterion}
\label{sec:hall}

For $I\subseteq[N]$, define its set of predecessor blocks by
\[
N_G^-(I)=\{j\in[N]:j\to i\in E(G)\text{ for some }i\in I\}.
\]

At the expanded equation--variable level, Datta already observed that nonvanishing is equivalent to a perfect matching in the bipartite polynomial graph \cite[Corollary~2]{DattaPolynomialGraphs2006}.  The following theorem is the player-block compression of that criterion: repeated strategy labels are aggregated into the capacities $k_i$, yielding Hall inequalities indexed by player sets and the compact flow test in \cref{cor:flow-nonvanishing}.

\begin{theorem}
\label{thm:hall-nonvanishing}
The generic algebraic degree is positive if and only if
\begin{equation}
\label{eq:capacitated-hall}
\sum_{i\in I}k_i
\le
\sum_{j\in N_G^-(I)}k_j
\qquad\text{for every }I\subseteq[N].
\end{equation}
\end{theorem}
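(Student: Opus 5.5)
By \cref{thm:eulerian-factor}, every weight $w_{\mathbf k}(a)$ is a positive integer. So $\mathcal D(G,\mathbf k)>0$ if and only if $\mathfrak E_{\mathbf k}(E(G))\neq\emptyset$, and the theorem reduces to a feasibility statement about an integral transportation problem. The plan is to build a bipartite network: a source side with one node $j^{\mathrm{out}}$ of supply $k_j$ for each player $j$, a sink side with one node $i^{\mathrm{in}}$ of demand $k_i$ for each player $i$, and an arc of infinite capacity $j^{\mathrm{out}}\to i^{\mathrm{in}}$ for each $j\to i\in E(G)$. A balanced $\mathbf k$-factor is exactly an integral flow that saturates all supplies and demands, with $a_{ji}$ the flow on $j^{\mathrm{out}}\to i^{\mathrm{in}}$. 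The total supply $\sum_j k_j$ equals the total demand, so we need a flow of value $d$.

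\emph{Necessity.} Let $a$ be a balanced factor and fix $I\subseteq[N]$. Then
\[
\sum_{i\in I}k_i=\sum_{i\in I}\sum_{j}a_{ji}=\sum_{j\in N_G^-(I)}\sum_{i\in I}a_{ji}\le\sum_{j\in N_G^-(I)}\sum_{i}a_{ji}=\sum_{j\in N_G^-(I)}k_j .
\]
The second equality holds because $a_{ji}>0$ forces $j\to i\in E(G)$, hence $j\in N_G^-(I)$.

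\emph{Sufficiency.} The cleanest route is to use the expanded level and invoke Hall's theorem directly. Expand each player $i$ into $k_i$ equation rows and each player $j$ into $k_j$ variable columns, and join row $(i,s)$ to column $(j,t)$ exactly when $j\in B_i(G)$. A set $R$ of rows touches a player set $I$ of receivers, and its neighbourhood is all columns of the players in $N_G^-(I)$, so
\[
|N(R)|=\sum_{j\in N_G^-(I)}k_j\ge\sum_{i\in I}k_i\ge|R|
\]
by \eqref{eq:capacitated-hall}. Hall's theorem then gives a perfect matching, i.e.\ a nonzero term of $\perm(A_{\mathrm{poly}})$. By \cref{prop:balanced-assignments} this makes $\mathcal D>0$; equivalently, aggregating the matching by players yields a balanced factor. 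This is Datta's matching criterion compressed to player blocks.

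Alternatively, one can stay at the player level and apply max-flow/min-cut to the network. Take a finite cut with source-side sinks $T$ and sink-side sources $S$. Finiteness forces every predecessor of $T$ to lie outside $S$, and the cut capacity is $\sum_{j\in S}k_j+\sum_{i\notin T}k_i$. Its minimum over admissible $S$ is attained at $S=[N]\setminus N_G^-(T)$, and is at least $d$ precisely under \eqref{eq:capacitated-hall}. Integrality of max flow then gives an integral $a$. The only point requiring care is this cut computation, in particular the observation that optimal cuts have $S$ equal to the complement of a predecessor set. Both routes are standard, so I expect no real obstacle beyond the bookkeeping in the reduction from blocks to the expanded rows and columns.
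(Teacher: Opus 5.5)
Your main argument is correct and is essentially the paper's proof: the same expanded equation--variable bipartite graph, with Hall's theorem applied via $|R|\le\sum_{i\in I}k_i\le\sum_{j\in N_G^-(I)}k_j=|N(R)|$. Your double-counting proof of necessity is just Hall's necessity aggregated to player blocks.

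One small slip is in the optional max-flow route, which is not needed for the proof. Let $S$ be the supply nodes on the source side of the cut and $T$ the demand nodes on the sink side. Then the finite-cut capacity is $\sum_{j\notin S}k_j+\sum_{i\notin T}k_i$, not $\sum_{j\in S}k_j+\sum_{i\notin T}k_i$. Only with this correction does the minimizer $S=[N]\setminus N_G^-(T)$ reproduce the Hall inequality for $I=T$.
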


\begin{proof}
Form a bipartite graph with the equation labels $(i,s)\in\Omega$ on the left and the variable labels $(j,t)\in\Omega$ on the right.  Join $(i,s)$ to $(j,t)$ exactly when $j\to i\in E(G)$.  A perfect matching is equivalent to a balanced block assignment.  By \cref{prop:balanced-assignments}, it exists exactly when $\mathcal D(G,\mathbf k)>0$.

The left vertices in a collection $I$ of player blocks have cardinality $\sum_{i\in I}k_i$ and exactly $\sum_{j\in N_G^-(I)}k_j$ neighbors.  Hall's condition therefore implies \eqref{eq:capacitated-hall}.  Conversely, take any set $S$ of left vertices and let $I$ be the set of players represented in $S$.  Then its neighborhood is the union of the right blocks indexed by $N_G^-(I)$, so
\[
|S|\le\sum_{i\in I}k_i
\le\sum_{j\in N_G^-(I)}k_j
=|N(S)|.
\]
Thus \eqref{eq:capacitated-hall} implies Hall's condition for every $S$, and the bipartite graph has a perfect matching.
\end{proof}

By reversal symmetry, \eqref{eq:capacitated-hall} is equivalent to the analogous inequalities with the successor sets $N_G^+(I)=\{j:i\to j\text{ for some }i\in I\}$.  For the complete loopless graph, only the singleton inequalities are nontrivial, and positivity reduces to
\[
k_i\le\sum_{j\ne i}k_j\qquad(1\le i\le N).
\]
This recovers the classical balanced-format condition \cite{McKelveyMcLennan1997}, called the within-boundary-format condition in the vector-bundle formulation \cite{AboPortakalSodomaco2026}.  Boundary format itself refers to the equality case $k_i=\sum_{j\ne i}k_j$ for some $i$.

\begin{corollary}
\label{cor:flow-nonvanishing}
Whether $\mathcal D(G,\mathbf k)$ is positive can be decided by a capacitated maximum-flow computation on a network with $2N+2$ vertices and $|E(G)|+2N$ arcs.
\end{corollary}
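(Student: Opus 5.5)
We build the standard transportation network for the balanced-factor margins and reduce positivity to the existence of a saturating flow. Take a source $s$, a sink $t$, a left copy $u_j$ of every player (the \emph{supply} side, indexing variable blocks/predecessors) and a right copy $v_i$ of every player (the \emph{demand} side, indexing equation blocks). This gives $2N+2$ vertices. The arcs are:
\begin{itemize}
\item $s\to u_j$ with capacity $k_j$, for each $j$;
\item $v_i\to t$ with capacity $k_i$, for each $i$;
\item $u_j\to v_i$ with capacity $+\infty$ (or $\min(k_i,k_j)$, or simply $d$), for each $j\to i\in E(G)$.
\end{itemize}
This gives $|E(G)|+2N$ arcs, matching the count in the statement.

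The first step is to show that integral flows of value $d=\sum_i k_i$ correspond exactly to balanced $\mathbf k$-factors supported on $E(G)$. Given such a flow, put $a_{ji}$ equal to the flow on $u_j\to v_i$. Saturation of the source arcs gives the out-degree equations $\sum_{i}a_{ji}=k_j$, and saturation of the sink arcs gives the in-degree equations $\sum_j a_{ji}=k_i$. The conditions \eqref{eq:eulerian-margins} therefore hold, and $a$ is supported on $E(G)$ because only the arcs of $G$ are present. Conversely, every $a\in\mathfrak E_{\mathbf k}(E(G))$ defines a flow of value $d$. Since capacities are integral, the integrality theorem says a maximum flow of value $d$ exists if and only if an integral one does.

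The second step is to link this to positivity. By \cref{thm:eulerian-factor}, $\mathcal D(G,\mathbf k)$ is a sum of strictly positive weights $w_{\mathbf k}(a)$ over $\mathfrak E_{\mathbf k}(E(G))$. Hence $\mathcal D(G,\mathbf k)>0$ if and only if $\mathfrak E_{\mathbf k}(E(G))\ne\emptyset$, which by the first step holds if and only if the maximum flow value equals $d$. Any polynomial max-flow algorithm then decides the question.

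As a consistency check, the flow criterion can be matched against \cref{thm:hall-nonvanishing} by max-flow/min-cut. With infinite middle capacities, a finite cut chooses a set $I$ of right vertices to separate from $t$. It must then also cut every $s\to u_j$ with $j\in N_G^-(I)$. Its capacity is therefore $\sum_{i\notin I}k_i+\sum_{j\in N_G^-(I)}k_j$, and requiring this to be at least $d$ is exactly \eqref{eq:capacitated-hall}. This check is not needed for the corollary.

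The only step needing care is the bijection between saturating integral flows and balanced factors, in particular that both margin families are enforced. Source saturation fixes the out-degrees and sink saturation fixes the in-degrees. Because the source and sink capacities each total $d$, a flow of value $d$ saturates both sides simultaneously. Everything else is routine.
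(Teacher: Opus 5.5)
Your argument is correct and is essentially the paper's proof: it uses the same source--block--block--sink network, with integrality of maximum flow identifying flows of value $d$ with balanced factors. You orient it from variable blocks $u_j$ to equation blocks $v_i$ where the paper goes from $i_L$ to $j_R$, which is immaterial, and you read off positivity from the strictly positive weights in \cref{thm:eulerian-factor} rather than going through balanced block assignments and \cref{thm:hall-nonvanishing}, which is only a cosmetic difference. One wording slip in your optional min-cut check: for the capacity $\sum_{i\notin I}k_i+\sum_{j\in N_G^-(I)}k_j$ to be right, $I$ must be the set of right vertices on the sink side of the cut, that is, separated from $s$ rather than from $t$.
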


\begin{proof}
Create vertices $i_L$ and $j_R$ for the equation and variable blocks.  Add an arc from the source to $i_L$ of capacity $k_i$, an arc from $i_L$ to $j_R$ of capacity $d$ whenever $j\to i\in E(G)$, and an arc from $j_R$ to the sink of capacity $k_j$.  The total demand is $d=\sum_i k_i$.  By integrality of maximum flow, a flow of value $d$ is equivalent to a balanced block assignment and hence, by \cref{thm:hall-nonvanishing}, to positive degree.
\end{proof}

Thus the nonvanishing decision problem is polynomial-time solvable even though exact degree evaluation is $\#\mathrm P$-complete by \cref{thm:degree-complexity}.

\subsection{Minimal positive supports: transportation forests}
\label{sec:minimal-supports}

For an arc set $E\subseteq\mathcal E_N$, recall that $\mathcal D_{\mathbf k}(E)$ denotes the degree of the graph $([N],E)$ and consider the transportation polytope with structural zeros
\begin{equation}
\label{eq:transportation-polytope}
\mathcal T_E(\mathbf k)
=\left\{a\in\mathbb R_{\ge0}^{E}:
\sum_{j:j\to i\in E}a_{ji}=k_i,\quad
\sum_{i:j\to i\in E}a_{ji}=k_j
\ \text{for all }i,j\right\}.
\end{equation}
Its integer points are precisely the balanced $\mathbf k$-factors supported on $E$.  The integrality of the corresponding flow problem also shows that $\mathcal T_E(\mathbf k)$ is nonempty exactly when $\mathcal D_{\mathbf k}(E)>0$.

Let $\mathcal B(E)$ be the bipartite graph with left vertices $i_L$, right vertices $j_R$, and an edge $i_Lj_R$ for every dependency arc $j\to i\in E$.  The classical support criterion for transportation polytopes identifies their vertices with feasible tables whose positive support is a spanning forest \cite{KleeWitzgall1968,DeLoeraKim2014}.  In the present setting it gives the following characterization of support-minimal feasible network architectures.

\Needspace{10\baselineskip}
\begin{theorem}
\label{thm:minimal-positive-support}
For $E\subseteq\mathcal E_N$, the following conditions are equivalent.
\begin{enumerate}[label=\textup{(\roman*)}]
\item $E$ is inclusion-minimal among the arc sets of positive degree; that is,
\[
\mathcal D_{\mathbf k}(E)>0,
\qquad
\mathcal D_{\mathbf k}(E\setminus\{e\})=0
\quad\text{for every }e\in E.
\]
\item The bipartite graph $\mathcal B(E)$ is a forest and $\mathcal T_E(\mathbf k)$ contains a point that is positive on every edge of $E$.
\item The graph $\mathcal B(E)$ is a forest, every connected component $C$ satisfies
\begin{equation}
\label{eq:component-balance}
\sum_{i_L\in C}k_i=\sum_{j_R\in C}k_j,
\end{equation}
and the following cut inequalities hold.  For $e=i_Lj_R\in E$, let $U_e$ be the component of $\mathcal B(E)-e$ containing $i_L$.  Then
\begin{equation}
\label{eq:forest-cut}
\delta_e
:=\sum_{p_L\in U_e}k_p-\sum_{q_R\in U_e}k_q
>0.
\end{equation}
\end{enumerate}
When these conditions hold, the transportation polytope consists of a single integral point $a^E$, given explicitly by
\begin{equation}
\label{eq:forest-flow}
a^E_{ji}=\delta_{i_Lj_R}.
\end{equation}
Moreover,
\begin{equation}
\label{eq:minimal-support-degree}
\mathcal D_{\mathbf k}(E)
=\prod_{i=1}^N
\frac{k_i!}{\prod_{j:j\to i\in E}(a^E_{ji})!}.
\end{equation}
\end{theorem}

\begin{proof}
Assume \textup{(i)} and choose an integral point $a\in\mathcal T_E(\mathbf k)$.  Minimality forces $a_e>0$ for every $e\in E$.  If $\mathcal B(E)$ contained a cycle, add and subtract the same positive integer alternately around that cycle.  Choosing the subtracted amount as the minimum entry on the negative half of the cycle preserves all margins, keeps every entry nonnegative, and makes at least one edge zero.  This would give a factor supported on a proper subset of $E$, a contradiction.  Hence $\mathcal B(E)$ is a forest and \textup{(ii)} follows.

Suppose \textup{(ii)} holds.  Summing the left and right margin equations over a connected component gives \eqref{eq:component-balance}.  If $e=i_Lj_R$ is deleted and $U_e$ contains $i_L$, all internal edge contributions cancel when the right margins in $U_e$ are subtracted from the left margins there.  The only remaining contribution is $a_e$, and therefore
\[
a_e=\sum_{p_L\in U_e}k_p-\sum_{q_R\in U_e}k_q=\delta_e>0.
\]
This proves \textup{(iii)} and \eqref{eq:forest-flow}.

Finally assume \textup{(iii)}.  On each tree component, orient every edge from its left endpoint to its right endpoint.  The signed incidence matrix has rank one less than the number of vertices.  Condition \eqref{eq:component-balance} is exactly the compatibility condition for the margin equations, and the cut computation above gives their unique solution.  By \eqref{eq:forest-cut}, that solution is positive; by \eqref{eq:forest-flow}, it is integral.  Thus $\mathcal T_E(\mathbf k)=\{a^E\}$ and the degree is positive.  Deleting any edge destroys feasibility, because a feasible point after deletion would be a second point of $\mathcal T_E(\mathbf k)$.  This proves \textup{(i)}.  Formula \eqref{eq:minimal-support-degree} now follows from \cref{thm:eulerian-factor}, since $a^E$ is the only balanced factor supported on $E$.
\end{proof}

\begin{corollary}
\label{cor:minimal-support-size}
\label{cor:positive-support-sparsification}
If $E$ is inclusion-minimal of positive degree and $\mathcal B(E)$ has $c$ connected components, then
\[
|E|=2N-c\le2N-1.
\]
Equality holds if and only if $\mathcal B(E)$ is connected.  As a bound uniform over strategy vectors, $2N-1$ is sharp for every $N\ge3$.
Moreover, every positive-degree dependency graph contains a positive-degree spanning subgraph with at most $2N-1$ arcs.
\end{corollary}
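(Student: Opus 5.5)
The plan is to read everything off \cref{thm:minimal-positive-support}. \(\mathcal B(E)\) has exactly \(2N\) vertices \(i_L,j_R\). If \(E\) is inclusion-minimal of positive degree, condition (ii) says \(\mathcal B(E)\) is a forest. A forest with \(c\) components on \(2N\) vertices has \(2N-c\) edges, and edges correspond bijectively to arcs of \(E\). Hence \(|E|=2N-c\). Since \(c\ge1\), \(|E|\le 2N-1\), with equality exactly when \(c=1\), that is, when \(\mathcal B(E)\) is connected.

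For the last assertion, let \(G\) have positive degree. Since \(\mathcal D\) is monotone under adding arcs, by \eqref{eq:eulerian-degree} with nonnegative weights, I will delete arcs one at a time while the degree stays positive. This terminates in an inclusion-minimal positive support \(E\subseteq E(G)\). The resulting subgraph is spanning because it keeps the vertex set \([N]\), and the count above gives \(|E|\le2N-1\). Equivalently, one can take the support of a vertex of the nonempty polytope \(\mathcal T_{E(G)}(\mathbf k)\); the support is a forest by the Klee--Witzgall criterion.

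The real work is sharpness: for each \(N\ge3\) I need some \(\mathbf k\) and a minimal support whose \(\mathcal B(E)\) is a spanning tree. I would verify condition (iii) of \cref{thm:minimal-positive-support}, namely component balance together with positive cut values \(\delta_e\). I would try the zigzag path
\[
1_L-2_R-2_L-3_R-\cdots-N_L-1_R,
\]
which adds the arc \(1\to N\) to a Hamiltonian cycle. This path is \(1_L\,2_R\) (arc \(2\to1\)), then \(2_R\,2_L\) (arc \(2\to2\)), but that is a loop and is forbidden. So I will instead use a path alternating through distinct players, \(i_L\) joined to \(j_R\) with \(j\ne i\). Concretely, take the path
\[
1_L,\ 2_R,\ 3_L,\ 4_R,\dots
\]
arranged so that each player appears once on each side and no edge joins \(i_L\) to \(i_R\). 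Then choose \(\mathbf k\) to make the cut values along the path positive. On a path the cut values are alternating partial sums \(k_{\ell_1}-k_{r_1}+k_{\ell_2}-\cdots\), so I need every partial sum positive and the total equal to zero. For \(N=3\) an explicit check is to take \(\mathbf k=(1,1,2)\). The path is
\[
3_L-1_R-2_L-3_R-1_L-2_R,
\]
with partial sums \(2-1=1\), \(+1=2\), \(-2=0\). The third partial sum is zero, which fails, so I will reorder or rescale the \(k_i\), for instance \(\mathbf k=(1,2,2)\), and tune until all partial sums are positive. For general \(N\) I will give a pattern with one large player placed first on the left and last on the right. The main obstacle is exactly this combinatorial construction: avoiding loops \(i_Li_R\) while keeping all alternating partial sums strictly positive. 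I also expect to explain why \(N=2\) is excluded: there the only arcs are \(1\to2\) and \(2\to1\), so \(|E|\le2<3\).
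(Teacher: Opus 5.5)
Your count $|E|=2N-c$ and the sparsification by successive deletion are correct, and they match the paper's argument. Your alternative via a vertex of $\mathcal T_{E(G)}(\mathbf k)$ also works.

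The gap is sharpness. It is part of the statement, and you leave it unproved. The criterion you plan to tune against is also wrong. In \cref{thm:minimal-positive-support}\textup{(iii)}, $\delta_e$ is computed on the side of $e$ containing its \emph{left} endpoint. Take a spanning path $v_1,\ldots,v_{2N}$ with $v_1$ a left vertex, and let $S_t$ be the signed prefix sums ($+k$ at left vertices, $-k$ at right vertices). For $t$ even, the edge $v_tv_{t+1}$ has left endpoint $v_{t+1}$, so $\delta_e=-S_t$ (using $S_{2N}=0$). You therefore need $S_t>0$ for odd $t$ and $S_t<0$ for even $t$. Equivalently, the forced flows $f_1=k(v_1)$ and $f_t=k(v_t)-f_{t-1}$ must all be positive; it is not enough that all prefix sums are positive. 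In your trial, $S_2=1>0$ already puts flow $-1$ on the edge $1_R2_L$.

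Your fallbacks fail too. $\mathcal B(\mathcal E_3)$ is a $6$-cycle, so every spanning tree for $N=3$ is a path with vertex sequence $a_L,c_R,b_L,a_R,c_L,b_R$. Its forced flows are $k_a,\ k_c-k_a,\ k_a+k_b-k_c,\ k_c-k_b,\ k_b$, which are positive if and only if $\max(k_a,k_b)<k_c<k_a+k_b$. No permutation of $(1,2,2)$ qualifies. A path with the same player at both ends is impossible for $N=3$, because it forces a loop edge $x_Lx_R$.

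The missing idea is to prescribe the flow rather than $\mathbf k$. Put positive integer weights on a spanning tree of $\mathcal B(\mathcal E_N)$ so that $i_L$ and $i_R$ have equal weighted degree for every $i$. Let that common value be $k_i$, and apply \textup{(ii)}$\Rightarrow$\textup{(i)} of \cref{thm:minimal-positive-support}.

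The paper does exactly this with $E_N^\star=\{1\to i,\ i\to1:2\le i\le N\}\cup\{2\to3\}$. The bidirected star yields two star components of $\mathcal B$. The edge $3_L2_R$ coming from the arc $2\to3$ joins them into a tree with $2N-1$ edges. The multiplicities $a_{21}=1$, $a_{31}=2$, $a_{12}=2$, $a_{13}=a_{23}=1$, and $a_{i1}=a_{1i}=1$ for $i\ge4$ are balanced with $\mathbf k=(N,2,2,1,\ldots,1)$. For $N=3$ this is the path case above with $k_c=3>k_a=k_b=2$.
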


\begin{proof}
Every component is a tree, so summing $|V(C)|-1$ over the components gives $|E|=2N-c$.  Hence equality in the bound is equivalent to connectedness.

For sharpness, take $N\ge3$ and
\[
E_N^\star
=\{1\to i,\ i\to1:2\le i\le N\}\cup\{2\to3\}.
\]
Its bipartite support graph is obtained by joining the two tree components of the bidirected star with the edge corresponding to $2\to3$, and is therefore a tree with $2N-1$ edges.  Give the arcs the positive integral multiplicities
\[
\begin{gathered}
a_{2,1}=1,\quad a_{3,1}=2,\quad a_{i,1}=1\ (i\ge4),\\
a_{1,2}=2,\quad a_{1,3}=1,\quad a_{1,i}=1\ (i\ge4),
\quad a_{2,3}=1.
\end{gathered}
\]
The resulting in- and out-degree vector is $\mathbf k=(N,2,2,1,\ldots,1)$.  Thus every edge of the tree carries positive flow, so \cref{thm:minimal-positive-support} shows that $E_N^\star$ is inclusion-minimal of positive degree and attains the bound.

The construction varies $\mathbf k$ with $N$; for $\mathbf k=\mathbf1$, every minimal positive support is a directed cycle cover and has $N$ arcs.  Finally, any positive support contains a minimal one by successive deletion, proving the last assertion.  Rooting each tree and accumulating the signed vertex weights computes all cut values, and hence \eqref{eq:minimal-support-degree}, in $O(N+|E|)$ arithmetic operations once the required factorials are tabulated.
\end{proof}

\section{Support Comparative Statics}
\label{sec:monotonicity}

Consider dependency graphs on the same player set and with the same strategy vector \(\mathbf k=(k_1,\ldots,k_N)\).  Coefficients are generic within the prescribed support associated with each graph.  We apply standard M\"obius inversion on the Boolean lattice \cite{Rota1964} to the exact-support decomposition supplied by the player-level degree identity.  The resulting increasing-differences statements belong to the usual supermodular framework \cite{Topkis1998}; the network-specific information lies in the balanced-factor coefficients and in the residual feasibility criteria below.

For $a\in\mathfrak E_{\mathbf k}(\mathcal E_N)$, let
\[
\operatorname{supp}(a)=\{(j,i)\in\mathcal E_N:a_{ji}>0\}.
\]
For $F\subseteq\mathcal E_N$, define the exact-support coefficient
\begin{equation}
\label{eq:mobius-coefficient}
\mu_{\mathbf k}(F)
=\sum_{\substack{a\in\mathfrak E_{\mathbf k}(\mathcal E_N)\\
\operatorname{supp}(a)=F}}
w_{\mathbf k}(a).
\end{equation}

\begin{proposition}
\label{prop:mobius-transportation-dimension}
Suppose $\mu_{\mathbf k}(F)>0$, and let $c(F)$ be the number of connected components of $\mathcal B(F)$.  Then
\begin{equation}
\label{eq:transportation-dimension}
\dim\mathcal T_F(\mathbf k)=|F|-2N+c(F).
\end{equation}
Moreover, $F$ is inclusion-minimal of positive degree if and only if $\mu_{\mathbf k}(F)>0$ and the dimension in \eqref{eq:transportation-dimension} is zero.
\end{proposition}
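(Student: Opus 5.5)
The argument has two parts. First we compute the dimension from a relative-interior point. Then we read off minimality from the forest characterization in \cref{thm:minimal-positive-support}.

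\emph{Dimension.} Suppose $\mu_{\mathbf k}(F)>0$. Then there is an integral balanced factor $a$ with $\operatorname{supp}(a)=F$. Hence $\mathcal T_F(\mathbf k)$ contains a point at which every coordinate indexed by $F$ is strictly positive. Near that point the nonnegativity constraints are inactive, so $\mathcal T_F(\mathbf k)$ has the same dimension as the affine subspace cut out by the margin equations in $\mathbb R^F$. That subspace is a translate of the kernel of the linear map $\mathbb R^F\to\mathbb R^{2N}$ sending $a$ to its left and right margins.

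This map is the unsigned incidence map of the bipartite graph $\mathcal B(F)$, with edge $i_Lj_R$ sent to $e_{i_L}+e_{j_R}$. Flipping the sign of every right coordinate turns it into the oriented incidence matrix (left to right) already used in the proof of \cref{thm:minimal-positive-support}. The rank of an oriented incidence matrix is the number of vertices minus the number of components, namely $2N-c(F)$. Here all $2N$ vertices $i_L$, $j_R$ are counted, and isolated vertices count as components.

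Rank--nullity then gives kernel dimension $|F|-2N+c(F)$, which proves \eqref{eq:transportation-dimension}. One point needs care: positivity of the point is what justifies equating the polytope's dimension with the dimension of the affine hull of the equations. Without it, $\mathcal T_F$ might lie in a proper face.

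\emph{Minimality.} Suppose $F$ is inclusion-minimal of positive degree. By \cref{thm:minimal-positive-support}, $\mathcal T_F(\mathbf k)=\{a^F\}$, and $a^F$ is integral and positive on every arc of $F$. So $a^F$ is a balanced factor with support exactly $F$, and $\mu_{\mathbf k}(F)\ge w_{\mathbf k}(a^F)>0$. A single point has dimension zero.

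Conversely, suppose $\mu_{\mathbf k}(F)>0$ and the dimension is zero. Then $|F|=2N-c(F)$, which is the number of edges of a spanning forest on $2N$ vertices with $c(F)$ components. A graph with $2N$ vertices, $c(F)$ components and $2N-c(F)$ edges has cyclomatic number zero, so $\mathcal B(F)$ is a forest. The factor witnessing $\mu_{\mathbf k}(F)>0$ is a point of $\mathcal T_F(\mathbf k)$ positive on every edge. This is condition (ii) of \cref{thm:minimal-positive-support}, and (i) follows.

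I expect no real obstacle. The one subtlety is the rank computation for an unsigned bipartite incidence matrix. Bipartiteness lets one reduce it to the oriented case by the sign flip on right vertices, and isolated vertices must be counted in $c(F)$ so that the formula holds as stated.
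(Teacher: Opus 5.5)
Your proof is correct and follows essentially the paper's argument. A full-support point makes the nonnegativity constraints inactive, the signed incidence rank $2N-c(F)$ gives the dimension, and the forward direction of the minimality claim is read off from \cref{thm:minimal-positive-support}. The only difference is in the direction from zero dimension to minimality: the paper argues directly that a singleton polytope with a full-support point becomes infeasible when any edge is deleted, whereas you pass through the cycle-rank-zero forest condition and part (ii) of that theorem; both routes are valid.
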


\begin{proof}
The condition $\mu_{\mathbf k}(F)>0$ means that $\mathcal T_F(\mathbf k)$ contains an integral point positive on every coordinate indexed by $F$.  Hence none of the nonnegativity constraints reduces its affine dimension.  The signed incidence matrix of $\mathcal B(F)$ has rank $2N-c(F)$, which proves \eqref{eq:transportation-dimension}.  A zero-dimensional polytope with a full-support point is a singleton and loses feasibility when any edge is deleted.  The converse follows from \cref{thm:minimal-positive-support}, since a minimal positive support is a forest carrying one positive integral flow.
\end{proof}

For disjoint arc sets $E,Q\subseteq\mathcal E_N$, set
\[
\Delta_Q\mathcal D_{\mathbf k}(E)
=\sum_{R\subseteq Q}(-1)^{|Q|-|R|}
\mathcal D_{\mathbf k}(E\cup R).
\]

\begin{proposition}
\label{thm:mobius-differences}
For every $E\subseteq\mathcal E_N$,
\begin{equation}
\label{eq:mobius-expansion}
\mathcal D_{\mathbf k}(E)
=\sum_{F\subseteq E}\mu_{\mathbf k}(F).
\end{equation}
Equivalently, the M\"obius transform of the degree function is nonnegative:
\begin{equation}
\label{eq:mobius-inversion}
\mu_{\mathbf k}(F)
=\sum_{E\subseteq F}(-1)^{|F|-|E|}
\mathcal D_{\mathbf k}(E)
\ge0.
\end{equation}
More generally, if $E\cap Q=\varnothing$, then
\begin{equation}
\label{eq:higher-difference}
\Delta_Q\mathcal D_{\mathbf k}(E)
=\sum_{\substack{F\subseteq\mathcal E_N\\F\setminus E=Q}}
\mu_{\mathbf k}(F)
\ge0.
\end{equation}
\end{proposition}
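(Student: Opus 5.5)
The plan is to start from \cref{thm:eulerian-factor}, which writes $\mathcal D_{\mathbf k}(E)$ as a sum of the nonnegative weights $w_{\mathbf k}(a)$ over balanced factors $a$ with $\operatorname{supp}(a)\subseteq E$. Grouping these factors by their exact support $F=\operatorname{supp}(a)$ partitions the index set. A factor is supported on $E$ exactly when its support is a subset of $E$, so regrouping gives $\mathcal D_{\mathbf k}(E)=\sum_{F\subseteq E}\mu_{\mathbf k}(F)$, which is \eqref{eq:mobius-expansion}. Nonnegativity of each $\mu_{\mathbf k}(F)$ is immediate, because every weight in \eqref{eq:eulerian-weight} is a positive integer.

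For \eqref{eq:mobius-inversion}, we apply M\"obius inversion on the Boolean lattice of subsets of $\mathcal E_N$, whose M\"obius function is $(-1)^{|F|-|E|}$ \cite{Rota1964}. To keep the argument self-contained, we can instead substitute \eqref{eq:mobius-expansion} into the alternating sum and exchange the order of summation. The coefficient of $\mu_{\mathbf k}(G)$ is then $\sum_{G\subseteq E\subseteq F}(-1)^{|F|-|E|}$. This sum vanishes unless $G=F$, by the binomial identity $(1-1)^{|F\setminus G|}=0$. Nonnegativity of the inverted expression is inherited from the definition of $\mu_{\mathbf k}$ in \eqref{eq:mobius-coefficient}.

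For the higher difference \eqref{eq:higher-difference}, we fix $E$ and $Q$ with $E\cap Q=\varnothing$ and expand each term $\mathcal D_{\mathbf k}(E\cup R)$ via \eqref{eq:mobius-expansion}. This gives
\[
\Delta_Q\mathcal D_{\mathbf k}(E)=\sum_{F\subseteq E\cup Q}\mu_{\mathbf k}(F)\sum_{R:\,F\cap Q\subseteq R\subseteq Q}(-1)^{|Q|-|R|},
\]
using the equivalence $F\subseteq E\cup R\iff F\cap Q\subseteq R$, which holds because $F\subseteq E\cup Q$ and $E\cap Q=\varnothing$. The inner sum equals $(1-1)^{|Q\setminus F|}$, so it is $1$ when $Q\subseteq F$ and $0$ otherwise. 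The surviving $F$ therefore satisfy $F\subseteq E\cup Q$ and $Q\subseteq F$, which is precisely $F\setminus E=Q$ once disjointness is used. Nonnegativity then follows termwise.

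The only point needing care is this last bookkeeping step: one must check that the constraint ``$F\subseteq E\cup Q$ and $Q\subseteq F$'' coincides with the stated indexing $F\setminus E=Q$. This relies on $E\cap Q=\varnothing$. Without disjointness, $F\setminus E$ could not contain elements of $Q$ lying in $E$, and the identification would fail. The rest of the argument is a routine application of inclusion--exclusion.
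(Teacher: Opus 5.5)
Your proposal is correct and follows the paper's proof. You group the balanced factors of \cref{thm:eulerian-factor} by exact support, invert on the Boolean lattice, and substitute the expansion into $\Delta_Q$ to find that the coefficient of $\mu_{\mathbf k}(F)$ is $1$ exactly when $F\setminus E=Q$; your explicit equivalence $F\subseteq E\cup R\iff F\cap Q\subseteq R$ (for $F\subseteq E\cup Q$) simply spells out a step the paper states without computation.
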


\begin{proof}
Equation \eqref{eq:eulerian-degree} expresses $\mathcal D_{\mathbf k}(E)$ as the sum of the weights of the balanced factors whose supports are contained in $E$.  Grouping them by exact support gives \eqref{eq:mobius-expansion}, and Boolean-lattice M\"obius inversion gives \eqref{eq:mobius-inversion}.

Substitute \eqref{eq:mobius-expansion} into the definition of $\Delta_Q$.  For a fixed exact support $F$, its total coefficient is
\[
\sum_{\substack{R\subseteq Q\\F\subseteq E\cup R}}
(-1)^{|Q|-|R|}.
\]
This sum is one when $F\setminus E=Q$ and zero otherwise.  Summing the surviving nonnegative weights proves \eqref{eq:higher-difference}.
\end{proof}

\begin{corollary}
\label{thm:support-monotonicity}
\label{cor:support-supermodular}
Let \(G\) and \(G'\) be directed dependency graphs on the same players.  If
\[
E(G)\subseteq E(G'),
\]
then
\begin{equation}
\label{eq:support-monotonicity}
\mathcal D(G,\mathbf k)\le \mathcal D(G',\mathbf k).
\end{equation}
The inequality is strict if and only if \(G'\) admits a balanced block assignment that is not admissible for \(G\).
Moreover, the degree is supermodular on the Boolean lattice of supports:
\begin{equation}
\label{eq:supermodular}
\mathcal D_{\mathbf k}(E_1)+\mathcal D_{\mathbf k}(E_2)
\le
\mathcal D_{\mathbf k}(E_1\cup E_2)
+\mathcal D_{\mathbf k}(E_1\cap E_2).
\end{equation}
Equivalently, if $E\subseteq E'$ and $e\notin E'$, then
\begin{equation}
\label{eq:increasing-marginal}
\mathcal D_{\mathbf k}(E\cup\{e\})-\mathcal D_{\mathbf k}(E)
\le
\mathcal D_{\mathbf k}(E'\cup\{e\})-\mathcal D_{\mathbf k}(E').
\end{equation}
\end{corollary}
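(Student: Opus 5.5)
The plan is to derive every assertion from the exact-support decomposition in \cref{thm:mobius-differences}, so no new counting is needed.

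For monotonicity, write $E=E(G)\subseteq E'=E(G')$. By \eqref{eq:mobius-expansion},
\[
\mathcal D_{\mathbf k}(E')-\mathcal D_{\mathbf k}(E)=\sum_{F\subseteq E',\,F\not\subseteq E}\mu_{\mathbf k}(F).
\]
Each summand is nonnegative by \eqref{eq:mobius-inversion}, which gives \eqref{eq:support-monotonicity}.

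For the strictness criterion, the difference is positive if and only if some $F\subseteq E'$ with $F\not\subseteq E$ has $\mu_{\mathbf k}(F)>0$. Equivalently, some balanced factor $a$ on $E'$ uses an arc outside $E$, since all weights $w_{\mathbf k}(a)$ are strictly positive. It remains to translate balanced factors back to block assignments, using the bijection in the proof of \cref{prop:balanced-assignments} and \cref{thm:eulerian-factor}. An admissible balanced assignment $\phi$ for $G'$ induces the factor $a_{ji}=|\{s:\phi(i,s)=j\}|$. This assignment is inadmissible for $G$ exactly when some row is sent to $j$ with $j\to i\notin E$, that is, exactly when $\operatorname{supp}(a)\not\subseteq E$. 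Conversely, every factor with positive weight is realized by at least one assignment. This gives the ``if and only if''.

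For supermodularity, apply \eqref{eq:mobius-expansion} to all four sets and compare coefficients of each $\mu_{\mathbf k}(F)$. Consider the right-hand side of \eqref{eq:supermodular} minus the left-hand side. The coefficient of $\mu_{\mathbf k}(F)$ is
\[
[F\subseteq E_1\cup E_2]+[F\subseteq E_1\cap E_2]-[F\subseteq E_1]-[F\subseteq E_2].
\]
A short case check shows this quantity is always nonnegative:
\begin{itemize}
\item if $F$ lies in both $E_1$ and $E_2$, it equals $1+1-1-1=0$;
\item if $F$ lies in exactly one of them, it equals $1+0-1=0$;
\item if $F$ lies in neither, it equals $[F\subseteq E_1\cup E_2]\ge0$.
\end{itemize}
Nonnegativity of $\mu_{\mathbf k}$ then yields \eqref{eq:supermodular}.

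For the marginal form \eqref{eq:increasing-marginal}, the cleanest route is again coefficient comparison. The left marginal equals $\sum\mu_{\mathbf k}(F)$ over $F\subseteq E\cup\{e\}$ with $e\in F$. The right marginal is the same sum over the larger family $F\subseteq E'\cup\{e\}$ with $e\in F$. Nonnegativity of $\mu_{\mathbf k}$ gives the inequality. The stated equivalence with \eqref{eq:supermodular} is the standard lattice fact from \cite{Topkis1998}. One direction takes $E_1=E\cup\{e\}$ and $E_2=E'$. For the converse, induct along chains, adding one element at a time.

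The main obstacle is the strictness clause, not the inequalities. There one must state carefully the correspondence between ``not admissible for $G$'' and ``support not contained in $E(G)$''. One must also note that positivity of every weight rules out cancellation.
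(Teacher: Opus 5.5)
Your proposal is correct and follows the paper's route: everything is reduced to the exact-support expansion \eqref{eq:mobius-expansion} with nonnegative coefficients $\mu_{\mathbf k}(F)$, and the monotonicity gap is written, exactly as in the paper, as the sum of $\mu_{\mathbf k}(F)$ over $F\subseteq E(G')$ with $F\nsubseteq E(G)$. The one difference is in the supermodularity step: you check \eqref{eq:supermodular} directly for arbitrary $E_1,E_2$ by a four-term coefficient comparison, where the coefficient of $\mu_{\mathbf k}(F)$ is the indicator that $F\subseteq E_1\cup E_2$ meets both $E_1\setminus E_2$ and $E_2\setminus E_1$. The paper instead cites the case $|Q|=2$ of \eqref{eq:higher-difference} and leaves the standard local-to-global step implicit, so your version is slightly more self-contained.
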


\begin{proof}
The difference in \eqref{eq:support-monotonicity} is the sum of $\mu_{\mathbf k}(F)$ over $F\subseteq E(G')$ with $F\nsubseteq E(G)$, proving monotonicity and its strictness criterion.  The two-point case of \eqref{eq:higher-difference} is equivalent to \eqref{eq:supermodular} and \eqref{eq:increasing-marginal}.
\end{proof}

The effect of a single dependency edge can be resolved more precisely.  Let \(e_j\in\mathbb Z^N\) denote the \(j\)-th standard basis vector, and interpret a coefficient with a negative exponent as zero.

\begin{theorem}
\label{thm:edge-marginal}
Suppose \(j\to i\notin E(G)\), with \(j\ne i\), and let
\[
G^+=G\cup\{j\to i\}.
\]
Then the increase in generic complex algebraic degree is
\begin{equation}
\label{eq:edge-marginal-compact}
\begin{aligned}
\mathcal D(G^+,\mathbf k)-\mathcal D(G,\mathbf k)
=[z^{\mathbf k}]&\left[\bigl(L_i^G(z)+z_j\bigr)^{k_i}
-\bigl(L_i^G(z)\bigr)^{k_i}\right]\\
&\times\prod_{\ell\ne i}\bigl(L_\ell^G(z)\bigr)^{k_\ell}.
\end{aligned}
\end{equation}
Equivalently,
\begin{equation}
\label{eq:edge-marginal-expanded}
\mathcal D(G^+,\mathbf k)-\mathcal D(G,\mathbf k)
=\sum_{r=1}^{k_i}\binom{k_i}{r}
[z^{\mathbf k-r e_j}]
\bigl(L_i^G(z)\bigr)^{k_i-r}
\prod_{\ell\ne i}\bigl(L_\ell^G(z)\bigr)^{k_\ell}.
\end{equation}
In particular, the new edge increases the degree strictly if and only if at least one coefficient on the right-hand side of \eqref{eq:edge-marginal-expanded} is positive.
\end{theorem}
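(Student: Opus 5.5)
The plan is to work entirely from the coefficient representation in \cref{prop:balanced-assignments}, namely $\mathcal D(G,\mathbf k)=[z^{\mathbf k}]\prod_\ell (L_\ell^G(z))^{k_\ell}$. Adding the arc $j\to i$ changes only the $i$th linear form: $L_i^{G^+}=L_i^G+z_j$, while $L_\ell^{G^+}=L_\ell^G$ for $\ell\ne i$. This holds because $j\ne i$ and $j\notin B_i(G)$, since $j\to i\notin E(G)$, so $z_j$ is genuinely a new term in $L_i$ and not a duplicate. Subtracting the two coefficient expressions and using linearity of $[z^{\mathbf k}]$ gives \eqref{eq:edge-marginal-compact} immediately.

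For \eqref{eq:edge-marginal-expanded}, I would expand $(L_i^G+z_j)^{k_i}$ by the binomial theorem as $\sum_{r=0}^{k_i}\binom{k_i}{r}z_j^r(L_i^G)^{k_i-r}$. The $r=0$ term cancels the subtracted $(L_i^G)^{k_i}$. For each $r\ge1$, multiplying by the monomial $z_j^r$ shifts coefficient extraction: $[z^{\mathbf k}]\,z_j^rP=[z^{\mathbf k-re_j}]P$. This extraction is zero when $r>k_j$, which matches the stated convention that a coefficient with a negative exponent is zero.

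For the strictness criterion, I would note that every $L_\ell^G$ has nonnegative (zero--one) coefficients. Hence every coefficient $[z^{\mathbf k-re_j}](L_i^G)^{k_i-r}\prod_{\ell\ne i}(L_\ell^G)^{k_\ell}$ is a nonnegative integer, and the binomial factors are positive. The sum is therefore positive if and only if some term is positive. This is consistent with the monotonicity in \cref{thm:support-monotonicity}.

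There is no serious obstacle. The only points needing care are checking that the new arc does not already appear in $L_i^G$, so the added term is exactly $z_j$, and handling the negative-exponent convention when $r>k_j$. As an optional combinatorial reading, the $r$th term counts the balanced assignments that send exactly $r$ of player $i$'s rows to block $j$, with $\binom{k_i}{r}$ choosing which rows those are.
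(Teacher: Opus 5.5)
Your proposal is correct and follows essentially the same route as the paper: only the $i$th linear form changes, to $L_i^G+z_j$, so subtracting the two instances of \eqref{eq:main-permanent} gives the compact form. The binomial theorem with the shift $[z^{\mathbf k}]z_j^rP=[z^{\mathbf k-re_j}]P$ then gives the expanded form, and nonnegativity of all coefficients yields the strictness criterion; your explicit checks that $j\notin B_i(G)$ and that the terms with $r>k_j$ vanish are details the paper leaves implicit.
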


\begin{proof}
Only the support polynomial for player \(i\) changes, from \(L_i^G\) to \(L_i^G+z_j\).  Subtracting the two instances of \eqref{eq:main-permanent} gives \eqref{eq:edge-marginal-compact}.  Applying the binomial theorem and extracting the factor \(z_j^r\) gives \eqref{eq:edge-marginal-expanded}.  Every coefficient involved is nonnegative, which proves the final assertion.
\end{proof}

For a fixed \(r\), the factor \(\binom{k_i}{r}\) chooses the \(r\) equation rows of player \(i\) that use the new block \(j\).  The remaining coefficient counts the balanced completions using old edges.  Thus \eqref{eq:edge-marginal-expanded} partitions the new balanced assignments according to how many times the added dependency edge is used at the player-block level.

\begin{corollary}
\label{cor:edge-hall}
Under the assumptions of \cref{thm:edge-marginal}, the inequality
\[
\mathcal D(G^+,\mathbf k)>\mathcal D(G,\mathbf k)
\]
holds if and only if
\begin{equation}
\label{eq:residual-hall}
\sum_{\ell\in I}\bigl(k_\ell-\mathbf 1_{\{\ell=i\}}\bigr)
\le
\sum_{q\in N_{G^+}^-(I)}
\bigl(k_q-\mathbf 1_{\{q=j\}}\bigr)
\qquad\text{for every }I\subseteq[N].
\end{equation}
\end{corollary}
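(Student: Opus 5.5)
Our plan is to reduce strict increase to the existence of a balanced factor that uses the new arc, and then to recognize that existence as a capacitated Hall problem for a residual strategy vector.

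By \cref{thm:edge-marginal}, or directly from \eqref{eq:higher-difference} with $Q=\{j\to i\}$, the difference $\mathcal D(G^+,\mathbf k)-\mathcal D(G,\mathbf k)$ is a sum of nonnegative weights. It is the sum of $\mu_{\mathbf k}(F)$ over exact supports $F\subseteq E(G^+)$ that contain $j\to i$. Hence strict increase holds if and only if some balanced $\mathbf k$-factor $a$ supported on $E(G^+)$ has $a_{ji}\ge1$.

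Given such an $a$, we lower $a_{ji}$ by one to get $a'$. This $a'$ is a nonnegative integral family supported on $E(G^+)$. Its in-degrees are $k_\ell-\mathbf 1_{\{\ell=i\}}$, counted as receiver margins. Its out-degrees are $k_q-\mathbf 1_{\{q=j\}}$, counted as source margins. Conversely, any integral family with these residual margins, supported on $E(G^+)$, becomes a balanced $\mathbf k$-factor using $j\to i$ once we add one to its $(j,i)$ entry.

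So strict increase is equivalent to feasibility of the residual transportation problem on $\mathcal B(E(G^+))$. The left capacities are $k_\ell-\mathbf 1_{\{\ell=i\}}$ and the right capacities are $k_q-\mathbf 1_{\{q=j\}}$. The total supply and total demand both equal $d-1$, and all capacities are nonnegative because $k_\ell\ge1$.

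The remaining step is to show that feasibility of this residual problem is equivalent to \eqref{eq:residual-hall}. We will repeat the argument of \cref{thm:hall-nonvanishing} with the residual capacities. Expand each left block $\ell$ into $k_\ell-\mathbf 1_{\{\ell=i\}}$ labeled rows and each right block $q$ into $k_q-\mathbf 1_{\{q=j\}}$ labeled columns. Join a row of $\ell$ to a column of $q$ whenever $q\to\ell\in E(G^+)$. A perfect matching in this graph is the same as an integral residual flow, as in \cref{cor:flow-nonvanishing}, and integrality of max-flow covers the converse.

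Hall's theorem, aggregated over player blocks, then gives the capacitated inequalities. For a set $S$ of rows with player set $I$, we have $|S|\le\sum_{\ell\in I}(k_\ell-\mathbf 1_{\{\ell=i\}})$. Its neighborhood has size $\sum_{q\in N^-_{G^+}(I)}(k_q-\mathbf 1_{\{q=j\}})$. So \eqref{eq:residual-hall} implies Hall's condition, and taking $S$ to be all rows of $I$ gives the converse.

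The only subtle point is the degenerate case $k_i=1$ or $k_j=1$, where a block has zero residual capacity. We expect this to be the main place to check care. A block with empty residual capacity contributes nothing to either side, so both the matching argument and \eqref{eq:residual-hall} remain valid without modification. We will note this explicitly.
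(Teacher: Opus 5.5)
Your proposal is correct and follows the paper's proof: strict increase is reduced to the existence of a balanced factor using $j\to i$, one copy of that arc is removed, and the capacitated Hall argument of \cref{thm:hall-nonvanishing} is applied to the residual capacities. One refinement to your remark on the degenerate case: if $k_i=1$ and $i\in I$, the rows of $I$ have neighborhood indexed by $N^-_{G^+}(I\setminus\{i\})\subseteq N^-_{G^+}(I)$. So taking $S$ to be all rows of $I$ yields \eqref{eq:residual-hall} through an inequality $|N(S)|\le\sum_{q\in N^-_{G^+}(I)}(k_q-\mathbf 1_{\{q=j\}})$, not an exact neighborhood count, and this still suffices.
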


\begin{proof}
Strict growth is equivalent to the existence of a balanced $\mathbf k$-factor of $G^+$ that uses $j\to i$.  Remove one copy of that arc.  The remaining bipartite matching problem has left capacities $k_\ell-\mathbf 1_{\{\ell=i\}}$ and right capacities $k_q-\mathbf 1_{\{q=j\}}$, with adjacencies still given by $G^+$.  Conversely, any perfect matching for this residual problem becomes a balanced $\mathbf k$-factor using the new arc after the removed copy is restored.  Applying the capacitated Hall argument from \cref{thm:hall-nonvanishing} to these residual capacities gives \eqref{eq:residual-hall}.
\end{proof}

\section{Exact-Support Fibers and Strategy Scaling}
\label{sec:scaling}

The coefficient $\mu_{\mathbf k}(F)$ records the total weight of the balanced factors whose support is exactly $F$.  This section studies the internal geometry of that fiber and then lets all margins grow proportionally.  Alternating-cycle moves are the standard Markov moves for a two-way transportation fiber \cite{DiaconisSturmfels1998}; the point here is that the weights supplied by the player-level expansion have a compatible strict concavity, and that their total has a network-dependent capacity asymptotic.

\subsection{Exact support and alternating cycles}

Write
\[
\mathfrak E_{\mathbf k}^{\circ}(F)
=\{a\in\mathfrak E_{\mathbf k}(F):a_e>0\text{ for every }e\in F\}.
\]
Thus $\mu_{\mathbf k}(F)>0$ exactly when $\mathfrak E_{\mathbf k}^{\circ}(F)$ is nonempty.  Let
\[
d_i^L(F)=|\{j:j\to i\in F\}|,
\qquad
d_j^R(F)=|\{i:j\to i\in F\}|,
\]
and define the residual margins
\begin{equation}
\label{eq:exact-residual-margins}
r_i(F)=k_i-d_i^L(F),
\qquad
s_j(F)=k_j-d_j^R(F).
\end{equation}
For $I\subseteq[N]$, put
\[
N_F^-(I)=\{j:j\to i\in F\text{ for some }i\in I\}.
\]

\begin{theorem}
\label{thm:exact-support-hall}
For an arc set $F\subseteq\mathcal E_N$, the following conditions are equivalent:
\begin{enumerate}[label=\textup{(\roman*)}]
\item $\mu_{\mathbf k}(F)>0$;
\item the residual margins in \eqref{eq:exact-residual-margins} are nonnegative and
\begin{equation}
\label{eq:exact-support-hall}
\sum_{i\in I}r_i(F)
\le
\sum_{j\in N_F^-(I)}s_j(F)
\qquad\text{for every }I\subseteq[N].
\end{equation}
\end{enumerate}
\end{theorem}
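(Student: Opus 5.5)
The plan is to reduce exact support to ordinary feasibility by a shift of one unit on every arc of $F$, and then to quote the capacitated Hall argument of \cref{thm:hall-nonvanishing} for the shifted margins.

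\emph{Step 1: the shift.} By the remark after \eqref{eq:mobius-coefficient}, $\mu_{\mathbf k}(F)>0$ exactly when $\mathfrak E^{\circ}_{\mathbf k}(F)$ is nonempty, because all weights $w_{\mathbf k}(a)$ are positive. Given $a\in\mathfrak E^{\circ}_{\mathbf k}(F)$, set $b=a-\mathbf 1_F$. Then $b$ is a nonnegative integral table supported on $F$. Its in-margin at $i$ is $k_i-d_i^L(F)=r_i(F)$, and its out-margin at $j$ is $k_j-d_j^R(F)=s_j(F)$. Conversely, any nonnegative integral $b$ supported on $F$ with row sums $r_i(F)$ and column sums $s_j(F)$ gives $a=b+\mathbf 1_F\in\mathfrak E^{\circ}_{\mathbf k}(F)$. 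So (i) holds if and only if this residual transportation problem on the bipartite graph $\mathcal B(F)$ has an integral solution. Existence of such a solution already forces the residual margins to be nonnegative, which gives that part of (ii).

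\emph{Step 2: Hall for the residual problem.} Next I would apply the bipartite construction from the proof of \cref{thm:hall-nonvanishing}. Replace player $i$ by $r_i(F)$ left copies and player $j$ by $s_j(F)$ right copies, and join copies exactly when $j\to i\in F$. An integral residual table is then the same as a matching that saturates both sides.

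\emph{Main obstacle: the total-sum equality.} Hall's condition \eqref{eq:exact-support-hall} only guarantees a left-saturating matching. For a perfect matching we also need $\sum_i r_i(F)=\sum_j s_j(F)$. This holds automatically, since $\sum_i d_i^L(F)=|F|=\sum_j d_j^R(F)$ and both $k$-sums equal $d$. With that equality, a left-saturating matching is perfect. Necessity of \eqref{eq:exact-support-hall} is immediate: the rows indexed by $I$ can only send mass to the columns in $N_F^-(I)$.

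For sufficiency, take an arbitrary set $S$ of left copies and let $I$ be the set of players represented in $S$. Then, exactly as in \cref{thm:hall-nonvanishing}, $|S|\le\sum_{i\in I}r_i\le\sum_{j\in N_F^-(I)}s_j=|N(S)|$. Copies with $r_i=0$ simply do not exist, so they cause no problem.

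As an alternative route for the whole of Step 2, I could cite the max-flow/min-cut integrality argument of \cref{cor:flow-nonvanishing} with source capacities $r_i$ and sink capacities $s_j$.
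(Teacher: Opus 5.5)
Your proposal is correct and follows the paper's proof: the shift $a=\mathbf 1_F+b$, the residual margins $r(F),s(F)$ with equal totals $d-|F|$, and the capacitated Hall argument on $\mathcal B(F)$. The only difference is how integrality is obtained: you get it directly from the expanded copy-level perfect matching, whereas the paper appeals to total unimodularity of the transportation constraint matrix.
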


\begin{proof}
An exact-support factor can be written uniquely as
\[
a=\mathbf 1_F+b,
\qquad b\in\mathbb Z_{\ge0}^{F}.
\]
The row and column margins of $b$ are respectively $r(F)$ and $s(F)$.  Their total sums agree because both equal $\sum_i k_i-|F|$.  The capacitated Hall theorem applied to the bipartite support $\mathcal B(F)$ gives \eqref{eq:exact-support-hall}.  The corresponding constraint matrix is totally unimodular, so real feasibility and nonnegative integral feasibility coincide.  Adding $\mathbf 1_F$ recovers an element of $\mathfrak E_{\mathbf k}^{\circ}(F)$.
\end{proof}

Let $C$ be a simple cycle of the bipartite graph $\mathcal B(F)$.  Its edges split into two alternating classes $C^+$ and $C^-$.  Define the signed cycle vector
\[
(\gamma_C)_e=
\begin{cases}
1,&e\in C^+,\\
-1,&e\in C^-,\\
0,&e\notin C.
\end{cases}
\]
Adding an integral multiple of $\gamma_C$ preserves every row and column margin.

\begin{proposition}
\label{prop:cycle-fiber-concavity}
Suppose $\mu_{\mathbf k}(F)>0$.
\begin{enumerate}[label=\textup{(\roman*)}]
\item Any two factors in $\mathfrak E_{\mathbf k}^{\circ}(F)$ can be joined by a sequence of simple alternating-cycle moves such that every intermediate factor remains in $\mathfrak E_{\mathbf k}^{\circ}(F)$.
\item Fix $a\in\mathfrak E_{\mathbf k}^{\circ}(F)$ and a simple cycle $C$.  On every integer interval for which $a+t\gamma_C$ remains positive, the sequence
\[
t\longmapsto w_{\mathbf k}(a+t\gamma_C)
\]
is strictly log-concave.  More precisely, whenever the three factors involved are positive,
\begin{equation}
\label{eq:cycle-log-concavity}
\frac{
w_{\mathbf k}(a+(t+1)\gamma_C)
w_{\mathbf k}(a+(t-1)\gamma_C)}
{w_{\mathbf k}(a+t\gamma_C)^2}
=
\prod_{e\in C}
\frac{a_e+t(\gamma_C)_e}
{a_e+t(\gamma_C)_e+1}
<1.
\end{equation}
\end{enumerate}
\end{proposition}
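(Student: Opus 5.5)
The plan is to treat the two parts separately. Part (ii) is a direct computation from the explicit weight \eqref{eq:eulerian-weight}. Part (i) is a conformal cycle decomposition of the difference of two exact-support factors, combined with a monotonicity argument that keeps every coordinate at least one.

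\emph{Part (ii).} Put $b=a+t\gamma_C$, so $b_e\ge1$ for $e\in F$ whenever the three factors are positive. Adding any multiple of $\gamma_C$ preserves the margins, so the numerators $\prod_i k_i!$ in \eqref{eq:eulerian-weight} cancel in the ratio. Only the coordinates $e\in C$ change, so the ratio in \eqref{eq:cycle-log-concavity} equals
\[
\prod_{e\in C}\frac{(b_e!)^2}{(b_e+(\gamma_C)_e)!\,(b_e-(\gamma_C)_e)!}.
\]
For $(\gamma_C)_e=\pm1$, each factor is symmetric in the sign of $(\gamma_C)_e$ and equals $b_e!^2/((b_e+1)!(b_e-1)!)=b_e/(b_e+1)$. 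Since $C$ is nonempty and every $b_e\ge1$, each factor lies strictly in $(0,1)$, and so does the product. This gives \eqref{eq:cycle-log-concavity}, hence strict log-concavity along the integer interval.

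\emph{Part (i).} Take $a,a'\in\mathfrak E^{\circ}_{\mathbf k}(F)$ and let $\Delta=a'-a\in\mathbb Z^F$. Then $\Delta$ has zero row and column sums at every vertex of $\mathcal B(F)$. I will show that $\Delta$ is an integral sum $\gamma_{C_1}+\cdots+\gamma_{C_m}$ of signed simple-cycle vectors that are \emph{conformal} to $\Delta$: each $\gamma_{C_r}$ is $+1$ only on edges with $\Delta_e>0$ and $-1$ only on edges with $\Delta_e<0$.

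To build the decomposition, start from an edge with $\Delta_e>0$ and walk in $\mathcal B(F)$. Zero margins at the endpoint force an incident edge with $\Delta<0$, which leads to a vertex where zero margins force an edge with $\Delta>0$, and so on. The walk alternates in sign, and since $\mathcal B(F)$ is finite it eventually revisits a vertex. The closed portion is a simple cycle whose signs alternate; this is consistent because cycles in a bipartite graph are even. Subtracting the resulting $\gamma_C$ preserves zero margins and conformality, and strictly decreases $\|\Delta\|_1$ by $|C|$. Induction on $\|\Delta\|_1$ then gives the decomposition.

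\emph{Positivity of the path.} Move from $a$ through the partial sums $a+\gamma_{C_1}+\cdots+\gamma_{C_r}$. By conformality, each coordinate $e$ changes monotonically from $a_e$ to $a'_e$, so it stays between these two values. Both endpoints are at least one, so every intermediate factor is positive on all of $F$. The margins are preserved and the support stays in $F$, so each intermediate factor lies in $\mathfrak E^{\circ}_{\mathbf k}(F)$. Each step is a simple alternating-cycle move.

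The main obstacle is precisely this positivity requirement. An arbitrary cycle basis of the circulation space would give a valid path of margin-preserving moves but could pass through zero entries and leave the exact-support fiber. The conformal, sign-respecting choice of cycles is what rules this out, so the care goes into the alternating walk construction that guarantees conformality. No earlier result beyond the margin equations \eqref{eq:eulerian-margins} and the weight formula is needed.
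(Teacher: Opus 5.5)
Your proof is correct and follows the paper's route. Part (ii) is the same factorial-ratio computation. Part (i) is the same conformal decomposition of the difference circulation into alternating simple cycles, applied one at a time so that each coordinate moves monotonically between its endpoint values. The only difference is cosmetic: the paper first subtracts $\mathbf 1_F$ and checks nonnegativity of the residual tables, whereas you directly bound each intermediate coordinate below by $\min(a_e,a'_e)\ge1$.
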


\begin{proof}
Subtract $\mathbf 1_F$ from two exact-support factors.  The difference of the resulting nonnegative tables is an integral circulation on $\mathcal B(F)$.  Direct each edge according to the sign of this difference.  Conservation at every bipartite vertex decomposes the directed difference conformally into alternating simple cycles.  Applying those cycle vectors one at a time never moves a coordinate beyond its value in the target table, so all intermediate residual tables remain nonnegative.  Adding $\mathbf 1_F$ proves part~\textup{(i)}.

For part~\textup{(ii)}, put $b=a+t\gamma_C$.  The numerator $\prod_i k_i!$ in $w_{\mathbf k}$ is constant on the fiber.  For an edge on which $\gamma_C=1$, the two neighboring factorial ratios contribute $b_e/(b_e+1)$; the same expression results on an edge on which $\gamma_C=-1$.  Multiplication over the cycle gives \eqref{eq:cycle-log-concavity}.
\end{proof}

The proposition does not assert global log-concavity of the network degree polynomial.  It identifies strict log-concavity along the cycle directions that generate the toric fiber.

\subsection{Dilations of exact-support fibers}

For $m\ge1$, define the unweighted exact-support count
\[
\nu_{F,\mathbf k}(m)
=|\mathfrak E_{m\mathbf k}^{\circ}(F)|.
\]
The distinction between $\nu_{F,\mathbf k}(m)$ and $\mu_{m\mathbf k}(F)$ is important: the former counts exponent vectors, whereas the latter sums their factorial weights.

\begin{theorem}
\label{thm:exact-support-ehrhart}
Suppose $\mu_{\mathbf k}(F)>0$, let $c(F)$ be the number of connected components of $\mathcal B(F)$, and put
\begin{equation}
\label{eq:cycle-rank}
q(F)=|F|-2N+c(F).
\end{equation}
Then $\nu_{F,\mathbf k}(m)$ is the interior Ehrhart polynomial of $\mathcal T_F(\mathbf k)$.  In particular, it is a polynomial in $m$ of degree $q(F)$ and
\begin{equation}
\label{eq:exact-support-reciprocity}
\nu_{F,\mathbf k}(m)
=(-1)^{q(F)}L_{F,\mathbf k}(-m),
\end{equation}
where
\[
L_{F,\mathbf k}(m)
=|m\mathcal T_F(\mathbf k)\cap\mathbb Z^F|.
\]
Its leading coefficient is the relative lattice volume of $\mathcal T_F(\mathbf k)$.
\end{theorem}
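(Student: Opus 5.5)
The plan is to identify $\mathfrak E_{m\mathbf k}^{\circ}(F)$ with the interior lattice points of the dilate $m\mathcal T_F(\mathbf k)$, and then invoke Ehrhart--Macdonald reciprocity for a lattice polytope.

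\emph{Step 1: the dilate.} By definition \eqref{eq:transportation-polytope}, we have $m\mathcal T_F(\mathbf k)=\mathcal T_F(m\mathbf k)$. Its integer points are exactly $\mathfrak E_{m\mathbf k}(F)$. The points with all coordinates positive are exactly $\mathfrak E^{\circ}_{m\mathbf k}(F)$.

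\emph{Step 2: relative interior.} I will show that these positive points are the relative interior. Since $\mu_{\mathbf k}(F)>0$, the polytope $\mathcal T_F(\mathbf k)$ contains a point positive on all of $F$. Hence no inequality $a_e\ge0$ is an implicit equality. The affine hull is therefore the solution space of the margin equations, of dimension $q(F)$ by \cref{prop:mobius-transportation-dimension}. The facets are then among the sets $\{a_e=0\}\cap\mathcal T_F$, so the relative interior is $\{a\in\mathcal T_F: a_e>0\ \forall e\}$. Thus $\nu_{F,\mathbf k}(m)=|\operatorname{relint}(m\mathcal T_F(\mathbf k))\cap\mathbb Z^F|$.

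\emph{Step 3: integrality.} Ehrhart theory requires a lattice polytope, and this is where the main care lies. The constraint matrix is the incidence matrix of the bipartite graph $\mathcal B(F)$, which is totally unimodular, and the right-hand side $\mathbf k$ is integral. Hence every vertex of $\mathcal T_F(\mathbf k)$ is integral. This is the classical forest-support description of vertices, consistent with \cref{thm:minimal-positive-support}, where forest flows are given by integral cut values.

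\emph{Step 4: lattice normalization.} The leading coefficient should be the relative volume normalized by the lattice $\operatorname{aff}(\mathcal T_F)\cap\mathbb Z^F$. For this I need that the lattice of integral circulations on $\mathcal B(F)$ is the full lattice in the direction space. This again follows from total unimodularity, since the kernel lattice is spanned by the integral cycle vectors $\gamma_C$.

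\emph{Step 5: conclusion.} With these facts, Ehrhart's theorem gives that $L_{F,\mathbf k}(m)$ is a polynomial of degree $q(F)$ whose leading coefficient is the relative lattice volume. Macdonald reciprocity gives $|\operatorname{relint}(m\mathcal T)\cap\mathbb Z^F|=(-1)^{q(F)}L_{F,\mathbf k}(-m)$. This is \eqref{eq:exact-support-reciprocity}, and it shows that $\nu_{F,\mathbf k}$ is the interior Ehrhart polynomial, of the same degree and with the same leading coefficient.

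I expect no serious obstacle. The points needing care are the identification of the relative interior in Step 2, which uses $\mu_{\mathbf k}(F)>0$ to exclude implicit equalities, and the lattice-index normalization of the volume in Step 4.
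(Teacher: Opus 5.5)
Your proposal is correct and follows the paper's proof essentially step for step: the dilation identity $\mathcal T_F(m\mathbf k)=m\mathcal T_F(\mathbf k)$, total unimodularity for integrality of the polytope, the full-support point supplied by $\mu_{\mathbf k}(F)>0$ to identify the relative interior with the strictly positive tables, and Ehrhart--Macdonald reciprocity, with the dimension $q(F)$ taken from the transportation-dimension proposition. Your Step~4 is a harmless addition rather than a needed ingredient: for any lattice polytope, the leading Ehrhart coefficient is automatically its volume normalized by the lattice $\operatorname{aff}(\mathcal T_F(\mathbf k))\cap\mathbb Z^F$.
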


\begin{proof}
The margin equations give
\[
\mathcal T_F(m\mathbf k)=m\mathcal T_F(\mathbf k).
\]
The transportation constraint matrix is totally unimodular, so $\mathcal T_F(\mathbf k)$ is a lattice polytope.  Because it contains a point positive on every edge of $F$, its relative interior consists exactly of the points whose coordinates indexed by $F$ are all positive.  Hence $\nu_{F,\mathbf k}(m)$ counts the relative-interior lattice points of the $m$th dilation.  Ehrhart--Macdonald reciprocity gives \eqref{eq:exact-support-reciprocity} \cite{BeckRobins2015}.  The dimension is $q(F)$ by \eqref{eq:transportation-dimension}, which proves the degree and leading-coefficient statements.
\end{proof}

Consequently, the minimal positive supports are precisely the exact supports that remain rigid under proportional dilation: if $\mathcal B(F)$ is a forest, then $\nu_{F,\mathbf k}(m)=1$ for every $m\ge1$; if it contains a cycle $C$, then $2a-\gamma_C$, $2a$, and $2a+\gamma_C$ give three distinct exact-support factors at $m=2$.

\subsection{Network capacity and maximum-entropy flow}

For an arc set $F$, set
\[
L_i^F(z)=\sum_{j:j\to i\in F}z_j,
\qquad
P_{F,\mathbf k}(z)=\prod_{i=1}^N L_i^F(z)^{k_i},
\]
and define
\begin{equation}
\label{eq:network-capacity}
\operatorname{cap}_{\mathbf k}(F)
=\inf_{z\in\mathbb R_{>0}^N}
\frac{P_{F,\mathbf k}(z)}{z_1^{k_1}\cdots z_N^{k_N}}.
\end{equation}
For $p\in\mathcal T_F(\mathbf k)$, let
\[
q_{ji}(p)=\frac{p_{ji}}{k_i}
\qquad(j\to i\in F)
\]
and write
\[
H(q_i(p))=-\sum_{j:j\to i\in F}q_{ji}(p)\log q_{ji}(p),
\]
with $0\log0=0$.

Suppose $\mathcal D_{\mathbf k}(E)>0$.  Define the \emph{essential support} of $E$ at margin $\mathbf k$ by
\begin{equation}
\label{eq:essential-support}
E_{\mathbf k}^*(E)
=\{e\in E:p_e>0\text{ for some }p\in\mathcal T_E(\mathbf k)\}.
\end{equation}

\begin{proposition}
\label{prop:essential-support}
Let $F^*=E_{\mathbf k}^*(E)$.  Restriction to $F^*$ identifies $\mathcal T_E(\mathbf k)$ with $\mathcal T_{F^*}(\mathbf k)$, and the latter polytope contains a point positive on every edge of $F^*$.  Moreover, for every integer $m\ge1$,
\begin{equation}
\label{eq:essential-degree-reduction}
\mathcal D_{m\mathbf k}(E)=\mathcal D_{m\mathbf k}(F^*).
\end{equation}
\end{proposition}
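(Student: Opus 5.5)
The plan is to prove the three assertions in order: the restriction identity, the existence of a fully positive point on $F^*$, and then the degree identity at every dilation $m$.

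\emph{Restriction identity.} By the definition \eqref{eq:essential-support}, every $p\in\mathcal T_E(\mathbf k)$ vanishes on $E\setminus F^*$. Deleting the zero coordinates leaves the margin equations \eqref{eq:transportation-polytope} unchanged, so restriction maps $\mathcal T_E(\mathbf k)$ into $\mathcal T_{F^*}(\mathbf k)$. Extension by zero is the inverse map. Both maps are affine and mutually inverse, so they give the identification.

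\emph{A fully positive point.} For each $e\in F^*$, choose $p^{(e)}\in\mathcal T_E(\mathbf k)$ with $p^{(e)}_e>0$. Their average $\bar p=|F^*|^{-1}\sum_e p^{(e)}$ lies in $\mathcal T_E(\mathbf k)$ by convexity. It is positive on every edge of $F^*$, since all summands are nonnegative and at least one is positive on each edge. Restricting $\bar p$ gives the required point of $\mathcal T_{F^*}(\mathbf k)$. The set $F^*$ is nonempty, because $\mathcal D_{\mathbf k}(E)>0$ makes $\mathcal T_E(\mathbf k)$ nonempty and the margins are positive.

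\emph{Degree identity for every $m$.} The margin equations are homogeneous in the right-hand side, so $\mathcal T_E(m\mathbf k)=m\,\mathcal T_E(\mathbf k)$, as already used in the proof of \cref{thm:exact-support-ehrhart}. Any $a\in\mathfrak E_{m\mathbf k}(E)$ therefore satisfies $a/m\in\mathcal T_E(\mathbf k)$, so $a$ vanishes off $F^*$. This gives $\mathfrak E_{m\mathbf k}(E)=\mathfrak E_{m\mathbf k}(F^*)$ as sets of integral tables. The weight $w_{m\mathbf k}(a)$ depends only on $a$ and the margins. Hence \eqref{eq:eulerian-degree} from \cref{thm:eulerian-factor} yields \eqref{eq:essential-degree-reduction}. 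Equivalently, every exact support $F$ with $\mu_{m\mathbf k}(F)>0$ and $F\subseteq E$ lies inside $F^*$, and one can conclude via \eqref{eq:mobius-expansion}.

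The main point to watch is that $F^*$ is defined from real feasible points at level $\mathbf k$ and not at level $m\mathbf k$. The dilation identity handles this directly: the real polytopes at the two levels are scalar multiples of each other, so their supports agree. Total unimodularity, as used earlier, is needed only if one also wants integral witnesses; the proposition does not require them.
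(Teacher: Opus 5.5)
Your proposal is correct and follows the paper's proof essentially step for step. Both use the vanishing of every coordinate off $F^*$ for the restriction identity, average the witnesses $p^{(e)}$ to get a fully positive point, and use $\mathcal T_E(m\mathbf k)=m\,\mathcal T_E(\mathbf k)$ to show that balanced factors at margins $m\mathbf k$ are supported in $F^*$ with unchanged weights. Your additional check that $F^*$ is nonempty, which ensures the average is well defined, is a small point the paper leaves implicit.
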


\begin{proof}
Every coordinate outside $F^*$ vanishes at every point of $\mathcal T_E(\mathbf k)$, so restriction gives the first identification.  For each $e\in F^*$, choose $p^{(e)}\in\mathcal T_E(\mathbf k)$ with $p^{(e)}_e>0$.  The average of these finitely many flows restricts to a point of $\mathcal T_{F^*}(\mathbf k)$ positive on every edge.

For the last assertion, $\mathcal T_E(m\mathbf k)=m\mathcal T_E(\mathbf k)$.  Hence every flow with margins $m\mathbf k$ also vanishes outside $F^*$, while every flow on $F^*$ extends by zero to $E$.  Restriction therefore bijects the integral balanced factors at margins $m\mathbf k$ and preserves their factorial weights, proving \eqref{eq:essential-degree-reduction}.
\end{proof}

Assume $\mathcal T_F(\mathbf k)\cap\mathbb R_{>0}^F\ne\varnothing$.  Let $F_1,\ldots,F_{c(F)}$ be the connected components of $\mathcal B(F)$ and choose one right vertex $j_h^R$ from each component.  Put
\[
R=[N]\setminus\{j_1,\ldots,j_{c(F)}\}.
\]
For a probability vector $q_i$ on the right neighbors of $i_L$, extend $q_i$ by zero to $[N]$.  For a positive flow $p\in\mathcal T_F(\mathbf k)$, set
\begin{equation}
\label{eq:full-covariance}
\Sigma(p)
=\sum_{i=1}^Nk_i
\left(\operatorname{Diag}(q_i(p))-q_i(p)q_i(p)^{\mathsf T}\right),
\end{equation}
and let $\widehat\Sigma(p)$ be its principal submatrix on $R$.  Once the maximizing flow $p^*$ below has been defined, write
\begin{equation}
\label{eq:reduced-covariance}
\widehat\Sigma=\widehat\Sigma(p^*).
\end{equation}

\begin{lemma}
\label{lem:lattice-normalization}
Let $p\in\mathcal T_F(\mathbf k)\cap\mathbb R_{>0}^F$.  For a connected component $F_h$, denote its right vertex set by $J_h$.  Then
\begin{enumerate}[label=\textup{(\roman*)}]
\item
\[
\ker\Sigma(p)
=\operatorname{span}_{\mathbb R}\{\mathbf 1_{J_h}:1\le h\le c(F)\}.
\]
\item If $X(p)$ is the right-count vector obtained from one block of draws, consisting of $k_i$ independent draws from the right neighbors of each $i_L$ with probabilities $q_i(p)$, then its support-difference lattice is
\[
\Lambda_F
=\operatorname{span}_{\mathbb Z}\{x-y:x,y\in\operatorname{supp}X(p)\}
=\left\{v\in\mathbb Z^N:\sum_{j\in J_h}v_j=0\text{ for every }h\right\}.
\]
Deleting the coordinates $j_1,\ldots,j_{c(F)}$ identifies $\Lambda_F$ unimodularly with $\mathbb Z^{N-c(F)}$.
\item The matrix $\widehat\Sigma(p)$ is positive definite.  Its determinant is independent of the chosen right vertex deleted from each connected component.
\end{enumerate}
\end{lemma}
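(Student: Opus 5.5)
Part~(i) comes from the quadratic form of $\Sigma(p)$. For $v\in\mathbb R^N$,
\[
v^{\mathsf T}\Sigma(p)v=\sum_i k_i\operatorname{Var}_{q_i(p)}(v),
\]
the sum of $k_i$ times the variance of $v_j$ under $j\sim q_i(p)$. Each $q_i(p)$ is strictly positive exactly on the right neighbors of $i_L$ in $\mathcal B(F)$, because $p$ is positive on $F$. Hence $\Sigma(p)v=0$, which for a positive semidefinite matrix is equivalent to $v^{\mathsf T}\Sigma(p) v=0$, means the following: for every $i$, $v$ is constant on the right neighborhood of $i_L$. In addition, $q_i$ is extended by zero, so a right vertex $j$ is isolated if no $i$ points to it.

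An isolated right vertex cannot occur. Its column margin $k_j\ge1$ would force an incident edge. Two right vertices in the same component are joined by an alternating path in which consecutive right vertices share a left neighbor. So $v$ is constant on each $J_h$, and conversely every such $v$ lies in the kernel. Each left vertex also has degree $\ge1$, so each component contains right vertices, and the $\mathbf 1_{J_h}$ are independent because the $J_h$ are disjoint. This proves~(i).

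For part~(ii), the support of $X(p)$ is the set of vectors $\sum_i\sum_{s=1}^{k_i}e_{\phi(i,s)}$ with $\phi(i,s)$ a right neighbor of $i_L$. Every such vector has $J_h$-block sum equal to $\sum_{i_L\in F_h}k_i$, so the differences lie in $\Lambda_F$. For the reverse inclusion, change one draw of a single $i_L$ from neighbor $j$ to neighbor $j'$. This shows $e_{j'}-e_j$ lies in the difference lattice whenever $j,j'$ share a left neighbor. Chaining along paths in a component gives every $e_{j'}-e_j$ with $j,j'\in J_h$. These vectors generate $\{v:\sum_{J_h}v_j=0\}$ blockwise, so the two lattices agree. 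Deleting the chosen coordinate $j_h$ is a bijection $\Lambda_F\to\mathbb Z^{N-c}$: the deleted coordinate is recovered as minus the sum of the others in its block, which is integral. Hence the identification is unimodular.

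For part~(iii), $\widehat\Sigma(p)$ is the Gram form of $\Sigma(p)$ restricted to the subspace $\{v:v_{j_h}=0\ \forall h\}$. By~(i), this subspace meets $\ker\Sigma(p)$ only in $0$, since a kernel vector that is constant on $J_h$ and vanishes at $j_h$ is zero. So $\widehat\Sigma(p)$ is positive definite.

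For independence of the choice, I expect the only subtlety to be bookkeeping. The strategy is to present $\det\widehat\Sigma(p)$ intrinsically as the determinant of $\Sigma(p)$ restricted to the quotient $\mathbb R^N/\ker\Sigma$, or dually to $\Lambda_F\otimes\mathbb R$, computed in a basis of $\Lambda_F$. The key observation is that $\Sigma(p)$ is block diagonal across components: each $q_i$ is supported inside one $J_h$. So it suffices to treat one component, where $\Sigma_h$ has kernel $\mathbf 1_{J_h}$ and zero row sums.

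Within a component, deleting row and column $j$ versus $j'$ gives principal minors that are equal. This can be seen in either of two ways:
\begin{itemize}
\item the Laplacian-type fact that all cofactors of a symmetric matrix with zero row and column sums coincide, a matrix-tree style argument;
\item the change of coordinates $\mathbb Z^{J_h\setminus\{j\}}\to\mathbb Z^{J_h\setminus\{j'\}}$ through $\Lambda_F$, which is unimodular by~(ii) and so has determinant $\pm1$. The Gram determinants then agree.
\end{itemize}
I would use the second argument, since it reuses~(ii) directly.
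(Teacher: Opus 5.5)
Your proof is correct. For parts (i), (ii), and the positive-definiteness claim in (iii) it follows the paper's argument. You also fill two small points the paper leaves implicit: $\ker\Sigma(p)$ coincides with the null set of the quadratic form because $\Sigma(p)$ is positive semidefinite, and the margins $k_i\ge1$ rule out isolated vertices.

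The one genuine divergence is the proof that $\det\widehat\Sigma(p)$ does not depend on the deleted vertices. The paper uses your first bullet. On each block $J_h$ the covariance is symmetric of rank $|J_h|-1$ with kernel $\mathbb R\mathbf 1$, so its adjugate is a scalar multiple of $\mathbf 1\mathbf 1^{\mathsf T}$ and all principal cofactors coincide. The determinant is then the product of these cofactors over components.

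Your preferred route uses three facts:
\begin{itemize}
\item $\Sigma(p)=\operatorname{Cov}X(p)$;
\item the centered vector $X(p)-\mathbb E X(p)$ has zero block sums, so it lies in $\Lambda_F\otimes\mathbb R$;
\item for two deletion choices $R,R'$, the transition map $T=\pi_{R'}\circ\pi_R^{-1}$ between the coordinate charts of part (ii) sends $\mathbb Z^R$ bijectively onto $\mathbb Z^{R'}$, so $\det T=\pm1$.
\end{itemize}
It follows that $\widehat\Sigma^{(R')}=T\,\widehat\Sigma^{(R)}T^{\mathsf T}$, and the determinants agree. You should write this congruence out explicitly, since it is the precise content of ``the Gram determinants agree.''

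The paper's cofactor argument is self-contained linear algebra but needs the block-diagonal reduction. Yours needs no block reduction, since it changes all deleted vertices at once. It also shows directly that $\det\widehat\Sigma$ is the covariance determinant in a basis of $\Lambda_F$, which is exactly the normalization invoked when the local limit theorem is applied without a covolume factor.
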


\begin{proof}
For $v\in\mathbb R^N$, the quadratic form of \eqref{eq:full-covariance} is
\[
v^{\mathsf T}\Sigma(p)v
=\sum_{i=1}^Nk_i
\left(
\sum_{j:j\to i\in F}q_{ji}(p)v_j^2
-\left(\sum_{j:j\to i\in F}q_{ji}(p)v_j\right)^2
\right).
\]
Because $p$ is positive on $F$, this expression vanishes exactly when $v$ is constant on the right neighbor set of every left vertex.  In each connected component of $\mathcal B(F)$, any two right vertices are joined by a sequence of right vertices in which consecutive vertices share a left neighbor.  Hence $v$ is constant on each $J_h$, proving part~\textup{(i)}.

Every realization of $X(p)$ has the same coordinate sum on $J_h$, so its support differences lie in the lattice displayed in part~\textup{(ii)}.  Conversely, whenever $j$ and $\ell$ share a left neighbor $i_L$, two realizations that differ in one of the $k_i$ draws have difference $e_j-e_\ell$.  Such differences along a connected right projection of $F_h$ generate the root lattice
\[
\left\{v\in\mathbb Z^{J_h}:\sum_{j\in J_h}v_j=0\right\}.
\]
Taking the direct sum over the components proves the asserted equality.  The basis $e_j-e_{j_h}$, with $j\in J_h\setminus\{j_h\}$, shows that coordinate deletion is a unimodular lattice isomorphism.

Part~\textup{(i)} and coordinate deletion imply that $\widehat\Sigma(p)$ is positive definite.  The full covariance is block diagonal over the sets $J_h$.  On each block it is symmetric, has rank $|J_h|-1$, and has kernel spanned by the all-ones vector.  Its adjugate is therefore a scalar multiple of $\mathbf 1\mathbf 1^{\mathsf T}$, so all principal cofactors of that block are equal.  The determinant after deleting one coordinate per component is the product of these cofactors and is consequently independent of the deleted vertices.
\end{proof}

\Needspace{12\baselineskip}
\begin{theorem}
\label{thm:capacity-asymptotic}
Suppose $\mathcal T_F(\mathbf k)\cap\mathbb R_{>0}^F\ne\varnothing$.
\begin{enumerate}[label=\textup{(\roman*)}]
\item The capacity has the entropy representation
\begin{equation}
\label{eq:capacity-entropy}
\log\operatorname{cap}_{\mathbf k}(F)
=\max_{p\in\mathcal T_F(\mathbf k)}
\sum_{i=1}^Nk_iH(q_i(p)).
\end{equation}
The maximizing flow $p^*$ is unique and positive on every edge of $F$.  The infimum in \eqref{eq:network-capacity} is attained at a vector $z^*>0$, unique up to multiplication by an independent positive scalar on the right vertex set of each connected component of $\mathcal B(F)$, and
\begin{equation}
\label{eq:maximum-entropy-scaling}
p_{ji}^*
=\frac{k_i z_j^*}{L_i^F(z^*)}
\qquad(j\to i\in F).
\end{equation}
\item The reduced covariance matrix in \eqref{eq:reduced-covariance} is positive definite, its determinant is independent of the deleted right vertices, and
\begin{equation}
\label{eq:capacity-degree-asymptotic}
\mathcal D_{m\mathbf k}(F)
\sim
\frac{\operatorname{cap}_{\mathbf k}(F)^m}
{(2\pi m)^{(N-c(F))/2}\sqrt{\det\widehat\Sigma}}.
\end{equation}
\item Exact-support factors asymptotically carry the entire degree:
\begin{equation}
\label{eq:exact-support-dominance}
\frac{\mu_{m\mathbf k}(F)}{\mathcal D_{m\mathbf k}(F)}
\longrightarrow1.
\end{equation}
\end{enumerate}
\end{theorem}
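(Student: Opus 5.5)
For part~(i) I will use convex duality on the logarithm of the capacity objective. Substituting $z_j=e^{y_j}$, the function
\[
g(y)=\sum_i k_i\log L_i^F(e^y)-\sum_j k_jy_j
\]
is convex, since each term is a log-sum-exp. It is invariant under adding a constant on the right vertex set $J_h$ of each component: on that component the left degrees and the right degrees sum to the same total by \eqref{eq:component-balance}. For the lower bound, fix any $p\in\mathcal T_F(\mathbf k)$ and apply Gibbs' inequality (concavity of $\log$) row by row:
\[
\log L_i^F(z)\ge \sum_j q_{ji}\log(z_j/q_{ji}).
\]
Summing with weights $k_i$ and using the column margins $\sum_i k_iq_{ji}=k_j$ gives $g(y)\ge\sum_i k_iH(q_i(p))$. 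Hence $\log\operatorname{cap}\ge\max_p\sum_ik_iH(q_i(p))$.

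For the reverse inequality and attainment, I will show that $g$ is coercive modulo the kernel directions $\mathbf 1_{J_h}$. This is where the positive point of $\mathcal T_F(\mathbf k)$ enters. Writing $g(y)\ge \sum_i k_i\sum_j q_{ji}y_j+\text{const}-\sum_jk_jy_j$ with a strictly positive $p$ only gives a constant bound, so I would instead use that the recession function of $g$ is
\[
\sum_ik_i\max_{j\to i}v_j-\sum_jk_jv_j .
\]
This is nonnegative, and it is zero only for $v$ constant on components. To see this, write $\sum_jk_jv_j=\sum_i\sum_jp_{ji}v_j$ with $p>0$. Equality then forces $v_j=\max$ on every neighbor of each $i$, and connectivity propagates the equality through each component.

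So $g$ attains its minimum at some $z^*$. The stationarity condition $\sum_i k_iz_j/L_i(z)=k_j$ says exactly that $p^*$ defined by \eqref{eq:maximum-entropy-scaling} lies in $\mathcal T_F(\mathbf k)$, and for this $p^*$ the Gibbs step is an equality, which proves \eqref{eq:capacity-entropy}. Uniqueness of $p^*$ follows from strict concavity of the entropy in $p$. Uniqueness of $z^*$ modulo the component scalings follows from strict convexity of $g$ transverse to $\ker\Sigma$, because the Hessian of $g$ at $y$ is precisely $\Sigma$ of the induced flow, and Lemma~\ref{lem:lattice-normalization}(i) identifies its kernel.

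For part~(ii) I will use the probabilistic identity obtained from \eqref{eq:main-permanent} by tilting. For any $z>0$,
\[
\mathcal D_{m\mathbf k}(F)=\frac{P_{F,\mathbf k}(z)^m}{z^{m\mathbf k}}\,\Pr\bigl(S_m=m\mathbf k\bigr).
\]
Here $S_m$ is the right-count vector of $m$ independent blocks of draws; in each block, $i_L$ makes $k_i$ draws with probabilities $z_j/L_i(z)$. This holds because the coefficient extraction is exactly the probability of hitting $m\mathbf k$ multiplied by the normalizing factor. I will take $z=z^*$, so that the prefactor equals $\operatorname{cap}^m$ and $\mathbb E S_m=m\mathbf k$ by stationarity. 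The block vectors $X(p^*)$ are i.i.d., bounded, with covariance $\Sigma(p^*)$, and the minimal lattice of their support differences is $\Lambda_F$ by Lemma~\ref{lem:lattice-normalization}(ii). The standard lattice local limit theorem, applied after unimodular coordinate deletion to $\mathbb Z^{N-c}$, then gives
\[
\Pr(S_m=m\mathbf k)\sim\bigl((2\pi m)^{(N-c)/2}\sqrt{\det\widehat\Sigma}\bigr)^{-1}.
\]
The point $m\mathbf k$ is in the correct coset because it is the mean and an attainable value, and there is unit covolume. Positive definiteness and independence from the deleted vertices come from Lemma~\ref{lem:lattice-normalization}(iii). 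The main obstacle I anticipate is checking that the aperiodicity hypothesis of the local limit theorem is stated relative to the minimal lattice generated by differences rather than by the values themselves. Lemma~\ref{lem:lattice-normalization}(ii) is exactly what is needed for this, so I would cite the standard theorem in that form.

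For part~(iii), the difference $\mathcal D_{m\mathbf k}(F)-\mu_{m\mathbf k}(F)$ is the weight of factors with some zero entry. Under the same tilted measure, this is $\operatorname{cap}^m$ times the probability that $S_m=m\mathbf k$ and some edge count vanishes. For a fixed edge $e=j\to i$, the number of times $e$ is used is binomial with mean $mp^*_e>0$. So the probability that it is zero is at most $(1-q_{ji}^*)^{mk_i}$, which decays exponentially, whereas the local probability from part~(ii) decays only polynomially. A union bound over the finitely many edges therefore gives \eqref{eq:exact-support-dominance}.
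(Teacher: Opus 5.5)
Your proof is correct. Parts (ii) and (iii) follow the paper's argument essentially verbatim: you tilt by $z^*$ to get the exact identity $\mathcal D_{m\mathbf k}(F)=\operatorname{cap}_{\mathbf k}(F)^m\,\mathbb P\{S^{(m)}=m\mathbf k\}$, apply the lattice local limit theorem on $\Lambda_F$ after unimodular coordinate deletion, and use a union bound against polynomial decay. Your route to part (i), however, is dual to the paper's.

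The paper works on the primal side:
\begin{itemize}
\item the strictly concave entropy has a unique maximizer on the compact polytope $\mathcal T_F(\mathbf k)$;
\item positivity of $p^*$ comes from the infinite one-sided derivative of $-x\log x$ at $0$;
\item Lagrange multipliers in the relative interior then give the product form $p^*_{ji}=u_iz^*_j$;
\item uniqueness of $z^*$ up to component scalings comes from the equality case of the weighted AM--GM inequality at $p=p^*$.
\end{itemize}

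You instead minimize the convex function $g(y)$ directly. Existence costs you a coercivity argument: the recession function $\sum_ik_i\max_{j\to i}v_j-\sum_jk_jv_j$ vanishes only on the lineality space, and a standard attainment theorem handles convex functions whose recession directions are all lineality directions. That is where you use the positive point of $\mathcal T_F(\mathbf k)$. In exchange, membership of $p^*$ in $\mathcal T_F(\mathbf k)$ and its positivity are immediate from stationarity and the explicit formula \eqref{eq:maximum-entropy-scaling}, with no boundary or multiplier analysis. Your Hessian argument also makes visible that the covariance $\Sigma$ in the local limit theorem is exactly the Hessian of the log-capacity objective.

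One small point in your uniqueness step for $z^*$: \cref{lem:lattice-normalization}(i) is stated for flows in $\mathcal T_F(\mathbf k)$, but the Hessian at a general $y$ is $\Sigma$ of an induced flow that need not satisfy the column margins. You can fix this in either of two ways:
\begin{itemize}
\item note that the kernel computation uses only positivity on $F$; or
\item observe that every point on the segment between two minimizers is itself a minimizer, hence stationary, so its induced flow does lie in $\mathcal T_F(\mathbf k)$.
\end{itemize}

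You should also state explicitly, as the paper does, that $\{S^{(m)}=m\mathbf k\}$ is equivalent to equality of the retained coordinates, because component totals are deterministic. Likewise, say that the degenerate case $N=c(F)$ gives probability one.
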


\begin{proof}
For $p\in\mathcal T_F(\mathbf k)$ and $z>0$, weighted arithmetic--geometric mean gives, for each $i$,
\[
\log L_i^F(z)
\ge
\sum_{j:j\to i\in F}
q_{ji}(p)\log\frac{z_j}{q_{ji}(p)}.
\]
After multiplication by $k_i$ and summation over $i$, the column-margin equations imply
\[
\log\frac{P_{F,\mathbf k}(z)}{z^{\mathbf k}}
\ge
\sum_i k_iH(q_i(p)).
\]
The entropy functional on the right equals
\[
-\sum_{j\to i\in F}p_{ji}\log\frac{p_{ji}}{k_i}.
\]
It is strictly concave on the transportation polytope, hence has a unique maximizer $p^*$.  Since the polytope contains a positive point, mixing that point with a boundary point shows from the infinite one-sided derivative of $-x\log x$ at zero that $p^*$ is positive on every edge.  The first-order condition in the relative interior, equivalently Lagrange multipliers for the affine margin equations, gives positive vectors $u$ and $z^*$ such that $p_{ji}^*=u_i z_j^*$.  The row margins determine $u_i=k_i/L_i^F(z^*)$, proving \eqref{eq:maximum-entropy-scaling}.  Equality holds in the weighted arithmetic--geometric mean inequality at this scaling, which proves \eqref{eq:capacity-entropy} and shows that the capacity infimum is attained.  If $\widetilde z$ is any minimizing vector, equality must hold in every weighted arithmetic--geometric mean inequality above with $p=p^*$; hence \eqref{eq:maximum-entropy-scaling} also holds with $\widetilde z$.  The ratios $p_{ji}^*/p_{\ell i}^*=z_j^*/z_\ell^*$ then determine all ratios among right vertices in each connected component of $\mathcal B(F)$.  Thus any two minimizing vectors differ by an independent positive scalar on each component; conversely, such rescaling leaves the objective unchanged because the two margins have equal total on each component.  This is the usual entropy--matrix-scaling duality \cite{Elfving1980}.

We next convert the coefficient into a lattice probability.  For every $i$, take $mk_i$ independent draws from the right neighbors of $i_L$, with
\[
\mathbb P\{J_{i,r}=j\}=q_{ji}(p^*).
\]
Group the draws into $m$ independent blocks, each containing $k_i$ draws of type $i$.  Let $A^{(m)}=(A_{ji}^{(m)})_{j\to i\in F}$ be the resulting table of edge counts and let $S_j^{(m)}=\sum_iA_{ji}^{(m)}$ be its right-margin vector.  A table $a$ with row margins $m\mathbf k$ has probability
\[
\prod_i\frac{(mk_i)!}{\prod_j a_{ji}!}
\prod_{j\to i\in F}q_{ji}(p^*)^{a_{ji}}.
\]
If its column margins also equal $m\mathbf k$, then \eqref{eq:maximum-entropy-scaling} makes the second product independent of $a$:
\[
\prod_{j\to i\in F}q_{ji}(p^*)^{a_{ji}}
=
\frac{(z^*)^{m\mathbf k}}
{P_{F,\mathbf k}(z^*)^m}
=\operatorname{cap}_{\mathbf k}(F)^{-m}.
\]
Summing over the fiber and using \eqref{eq:eulerian-degree} yields the exact identity
\begin{equation}
\label{eq:degree-lattice-probability}
\mathcal D_{m\mathbf k}(F)
=\operatorname{cap}_{\mathbf k}(F)^m
\mathbb P\{S^{(m)}=m\mathbf k\}.
\end{equation}
The same calculation restricted to tables with support exactly $F$ gives
\begin{equation}
\label{eq:exact-support-lattice-probability}
\mu_{m\mathbf k}(F)
=\operatorname{cap}_{\mathbf k}(F)^m
\mathbb P\{S^{(m)}=m\mathbf k,\ A_e^{(m)}>0\text{ for every }e\in F\}.
\end{equation}

The mean of $S^{(m)}$ is $m\mathbf k$, and the covariance of its coordinates in $R$ is $m\widehat\Sigma$.  More precisely, the restriction of $S^{(m)}$ to $R$ is a sum of $m$ independent copies of the reduced one-block vector in \cref{lem:lattice-normalization}.  These vectors have finite support and positive-definite covariance $\widehat\Sigma$.  By part~\textup{(ii)} of that lemma, their support-difference lattice after coordinate deletion is the full lattice $\mathbb Z^{N-c(F)}$; thus the reduced distribution is aperiodic with maximal span one.  Its integral mean is the restriction of $\mathbf k$ to $R$.  Moreover, the omitted count in each component is determined by the retained counts and the fixed component total, so equality of the retained coordinates is equivalent to $S^{(m)}=m\mathbf k$.  If $N-c(F)>0$, the multivariate lattice local central limit theorem \cite[Chapter~5]{BhattacharyaRao2010} therefore gives
\begin{equation}
\label{eq:lattice-local-limit}
\mathbb P\{S^{(m)}=m\mathbf k\}
\sim
\frac{1}{(2\pi m)^{(N-c(F))/2}\sqrt{\det\widehat\Sigma}}.
\end{equation}
If $N-c(F)=0$, the probability in \eqref{eq:lattice-local-limit} is identically one, and the same formula holds with the determinant of the empty matrix equal to one.
The coordinate deletion in \cref{lem:lattice-normalization} is a lattice isomorphism, so \eqref{eq:lattice-local-limit} is expressed with respect to ordinary counting measure on $\mathbb Z^{N-c(F)}$ and contains no additional covolume factor.
Combining \eqref{eq:degree-lattice-probability} and \eqref{eq:lattice-local-limit} proves part~\textup{(ii)}.

Finally, dividing \eqref{eq:exact-support-lattice-probability} by \eqref{eq:degree-lattice-probability} gives the conditional-probability identity
\begin{equation}
\label{eq:exact-support-conditional-probability}
\frac{\mu_{m\mathbf k}(F)}{\mathcal D_{m\mathbf k}(F)}
=\mathbb P\{A_e^{(m)}>0\text{ for every }e\in F\mid S^{(m)}=m\mathbf k\}.
\end{equation}
An edge $j\to i$ is absent from the unconditioned sampled table with probability
\[
(1-q_{ji}(p^*))^{mk_i}.
\]
If $q_{ji}(p^*)=1$, this probability is zero; otherwise it decays exponentially because $q_{ji}(p^*)>0$.  Since $F$ is finite, a union bound gives
\[
\mathbb P\{A_e^{(m)}=0\text{ for some }e\in F\}=O(e^{-\delta m})
\]
for some $\delta>0$.  In contrast, \eqref{eq:lattice-local-limit} is of order $m^{-(N-c(F))/2}$.  Conditioning in \eqref{eq:exact-support-conditional-probability} therefore shows that the probability of omitting an edge is $O(m^{(N-c(F))/2}e^{-\delta m})$, which proves \eqref{eq:exact-support-dominance}.
\end{proof}

\begin{corollary}
\label{cor:essential-support-asymptotic}
Suppose $\mathcal D_{\mathbf k}(E)>0$, let $F^*=E_{\mathbf k}^*(E)$ be defined by \eqref{eq:essential-support}, and form $p^*$ and $\widehat\Sigma^*$ from $F^*$ as in \cref{thm:capacity-asymptotic}.  Then
\begin{equation}
\label{eq:essential-support-asymptotic}
\mathcal D_{m\mathbf k}(E)
\sim
\frac{\operatorname{cap}_{\mathbf k}(F^*)^m}
{(2\pi m)^{(N-c(F^*))/2}\sqrt{\det\widehat\Sigma^*}},
\end{equation}
and
\begin{equation}
\label{eq:essential-exact-dominance}
\frac{\mu_{m\mathbf k}(F^*)}{\mathcal D_{m\mathbf k}(E)}
\longrightarrow1.
\end{equation}
\end{corollary}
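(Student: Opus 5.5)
The corollary follows by combining \cref{prop:essential-support} with \cref{thm:capacity-asymptotic}, applied to $F=F^*$.

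First, since $\mathcal D_{\mathbf k}(E)>0$, the polytope $\mathcal T_E(\mathbf k)$ is nonempty. This holds because the integer points of $\mathcal T_E(\mathbf k)$ are exactly the balanced factors, and one of them has positive weight. So $F^*=E_{\mathbf k}^*(E)$ is well defined. By \cref{prop:essential-support}, $\mathcal T_{F^*}(\mathbf k)$ contains a point that is positive on every edge of $F^*$. This is precisely the hypothesis $\mathcal T_{F^*}(\mathbf k)\cap\mathbb R_{>0}^{F^*}\ne\varnothing$ of \cref{thm:capacity-asymptotic}. That theorem therefore applies to $F^*$. It produces the unique maximum-entropy flow $p^*$, the capacity $\operatorname{cap}_{\mathbf k}(F^*)$, and the positive-definite reduced covariance $\widehat\Sigma^*=\widehat\Sigma(p^*)$. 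The determinant of $\widehat\Sigma^*$ is independent of which right vertices are deleted, by \cref{lem:lattice-normalization}. The theorem gives
\[
\mathcal D_{m\mathbf k}(F^*)\sim
\frac{\operatorname{cap}_{\mathbf k}(F^*)^m}{(2\pi m)^{(N-c(F^*))/2}\sqrt{\det\widehat\Sigma^*}},
\qquad
\frac{\mu_{m\mathbf k}(F^*)}{\mathcal D_{m\mathbf k}(F^*)}\to1 .
\]

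Second, the identity \eqref{eq:essential-degree-reduction} gives $\mathcal D_{m\mathbf k}(E)=\mathcal D_{m\mathbf k}(F^*)$ for every $m\ge1$. Substituting this equality into both displayed statements yields \eqref{eq:essential-support-asymptotic} and \eqref{eq:essential-exact-dominance}.

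The only point that needs checking is that the essential support is stable under dilation, so that the same $F^*$ serves for every $m$. This follows from $\mathcal T_E(m\mathbf k)=m\mathcal T_E(\mathbf k)$, which is already part of the proof of \cref{prop:essential-support}. With that in place, no new estimate is required beyond \cref{thm:capacity-asymptotic}.
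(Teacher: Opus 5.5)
Your proposal is correct and matches the paper's proof. \cref{prop:essential-support} supplies the positive point of $\mathcal T_{F^*}(\mathbf k)$ needed to apply \cref{thm:capacity-asymptotic} to $F^*$, and it also gives $\mathcal D_{m\mathbf k}(E)=\mathcal D_{m\mathbf k}(F^*)$ for every $m$, which transfers both conclusions to $E$.
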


\begin{proof}
By \cref{prop:essential-support}, the polytope $\mathcal T_{F^*}(\mathbf k)$ contains a point positive on all of $F^*$ and its degree agrees with the degree on $E$ at every dilation.  Apply \cref{thm:capacity-asymptotic} to $F^*$.
\end{proof}

The essential-support reduction is needed even for complete dependency graphs.  For the complete loopless graph on three players and $\mathbf k=(1,2,3)$, every feasible flow is supported on
\[
F^*=\{3\to1,\ 3\to2,\ 1\to3,\ 2\to3\}.
\]
Here $c(F^*)=2$, $\operatorname{cap}_{\mathbf k}(F^*)=27/4$, and direct coefficient extraction gives
\[
\mathcal D_{m\mathbf k}(\mathcal E_3)
=\mathcal D_{m\mathbf k}(F^*)
=\binom{3m}{m}
\sim
\frac{\sqrt3}{2\sqrt{\pi m}}
\left(\frac{27}{4}\right)^m.
\]
Thus the power of $m$ is determined by $c(F^*)$, not by the connected complete support.

In particular, for every positive-degree support $E$,
\begin{equation}
\label{eq:capacity-growth-rate}
\lim_{m\to\infty}\mathcal D_{m\mathbf k}(E)^{1/m}
=\operatorname{cap}_{\mathbf k}(E_{\mathbf k}^*(E)),
\end{equation}
and the same limit holds with $\mu_{m\mathbf k}(E_{\mathbf k}^*(E))$ in place of the degree.  Formula \eqref{eq:maximum-entropy-scaling} also makes the exponential rate accessible through a standard matrix-scaling problem rather than enumeration of the expanded polynomial graph.

\subsection{Comparison with complete-support asymptotics}
\label{sec:known-asymptotics}

Take $F=\mathcal E_N$, $N\ge3$, and $\mathbf k=\mathbf1$.  Symmetry gives $z^*=\mathbf1$, $p^*_{ji}=1/(N-1)$ for $j\ne i$, and
\[
\operatorname{cap}_{\mathbf1}(\mathcal E_N)=(N-1)^N,
\qquad
\Sigma=\frac{N(N-2)}{(N-1)^2}
\left(I-\frac1N\mathbf1\mathbf1^{\mathsf T}\right).
\]
Thus $c(F)=1$ and
\[
\det\widehat\Sigma
=\frac1N\left(\frac{N(N-2)}{(N-1)^2}\right)^{N-1}.
\]
Substitution into \eqref{eq:capacity-degree-asymptotic} yields
\begin{equation}
\label{eq:complete-diagonal-asymptotic}
\mathcal D(\mathcal E_N,m\mathbf1)
\sim
\frac{\sqrt N\,(N-1)^{Nm+N-1}}
{\bigl(2\pi mN(N-2)\bigr)^{(N-1)/2}}.
\end{equation}
This is precisely the block-derangement asymptotic of \cite[Theorem~6.1]{Vidunas2017}: its block size is $m$, whereas each player here has $m+1$ pure strategies.  Keeping this shift explicit also reconciles \eqref{eq:complete-diagonal-asymptotic} with the strategy-count notation of \cite[Remark~2.12]{AboPortakalSodomaco2026}.

Unequal proportional growth on complete supports is also treated in \cite[Section~6]{Vidunas2017}.  The distinction in \cref{cor:essential-support-asymptotic} is the treatment of arbitrary structural zeros through $F^*$, which determines both the covariance rank and the correct lattice normalization.  The example $\mathbf k=(1,2,3)$ above shows why the support reduction changes even the power of $m$.  The exact-support concentration in \eqref{eq:essential-exact-dominance} further identifies which arc sets account for the leading weighted count.

\subsection{A three-player fiber}

Let $F=\mathcal E_3$ be the complete loopless directed graph on three players and take $\mathbf k=(2,2,2)$.  Every balanced factor of margins $m\mathbf k$ is determined by an integer $0\le t\le2m$: one directed $3$-cycle has multiplicity $t$ and the reverse cycle has multiplicity $2m-t$.  Hence
\begin{equation}
\label{eq:franel-degree}
\mathcal D_{m\mathbf k}(F)
=\sum_{t=0}^{2m}\binom{2m}{t}^3,
\qquad
\nu_{F,\mathbf k}(m)=2m-1.
\end{equation}
The bipartite support is a $6$-cycle, so $q(F)=1$ and $c(F)=1$.  The maximum-entropy flow assigns value $1$ to every arc, giving $\operatorname{cap}_{\mathbf k}(F)=64$.  Deleting one right coordinate gives $\det\widehat\Sigma=3/4$, and \cref{thm:capacity-asymptotic} yields
\begin{equation}
\label{eq:franel-asymptotic}
\mathcal D_{m\mathbf k}(F)
\sim
\frac{64^m}{\pi\sqrt3\,m}.
\end{equation}
The summands in \eqref{eq:franel-degree} form the one-dimensional instance of \eqref{eq:cycle-log-concavity}; the two boundary terms have weight one and are asymptotically negligible.

\subsection{A sparse asymmetric four-player network}
\label{sec:sparse-example}

Let $\mathbf k=(3,6,3,6)$ and specify the dependency support $F$ by the receiver--row matrix
\[
A_F=\begin{pmatrix}
0&1&1&0\\0&0&1&1\\1&0&0&1\\1&1&0&0
\end{pmatrix}.
\]
This network has eight of the twelve possible arcs and is not invariant under reversal.  Its bipartite support is a connected $8$-cycle, so $c(F)=1$ and $q(F)=1$.
\begin{figure}[htbp]
\centering
\begin{tikzpicture}[>=Stealth, every node/.style={circle,draw,minimum size=7mm},scale=1.05]
\node (1) at (0,2) {$1$}; \node (2) at (3,2) {$2$};
\node (3) at (3,0) {$3$}; \node (4) at (0,0) {$4$};
\draw[->] (2)--(1); \draw[->] (3)--(2); \draw[->] (4)--(3); \draw[->] (1)--(4);
\draw[<->] (1)--(3); \draw[<->] (2)--(4);
\end{tikzpicture}
\caption{The sparse support $F$ for $\mathbf k=(3,6,3,6)$.  Arrows indicate payoff dependence $j\to i$; the crossing of the diagonals is not a vertex.  Every displayed arc is essential.}
\label{fig:sparse-four-player}
\end{figure}

In receiver--row notation, all balanced factors at margins $m\mathbf k$ are
\begin{equation}
\label{eq:sparse-fiber}
T_m(t)=\begin{pmatrix}
0&t&3m-t&0\\
0&0&t&6m-t\\
3m-t&0&0&t\\
t&6m-t&0&0
\end{pmatrix},\qquad 0\le t\le3m.
\end{equation}
Indeed, the row and column equations determine every entry from the $(1,2)$ entry $t$.  The point $T_1(2)$ is positive on all eight arcs, proving $F^*=F$.  The two endpoint supports are transportation forests, hence inclusion-minimal positive supports; the interior integer values give $\nu_{F,\mathbf k}(m)=3m-1$.  The weighted formula is
\begin{equation}
\label{eq:sparse-exact-degree}
\mathcal D(F,m\mathbf k)=\sum_{t=0}^{3m}\binom{3m}{t}^{\!2}\binom{6m}{t}^{\!2},
\qquad
\mu_{m\mathbf k}(F)=\mathcal D(F,m\mathbf k)-1-\binom{6m}{3m}^{\!2}.
\end{equation}

The scaling vector $z^*=(1,2,1,2)$ has $L_i^F(z^*)=3$ for every $i$, and \eqref{eq:maximum-entropy-scaling} gives $p^*=T_1(2)$.  Consequently,
\[
\operatorname{cap}_{\mathbf k}(F)=\frac{3^{18}}{2^{12}},\qquad
\widehat\Sigma=
\begin{pmatrix}
2&-4/3&0\\
-4/3&2&-2/3\\
0&-2/3&2
\end{pmatrix},\qquad
\det\widehat\Sigma=\frac{32}{9},
\]
where the fourth right coordinate is deleted.  The first-order prediction is therefore
\begin{equation}
\label{eq:sparse-asymptotic}
\mathcal D(F,m\mathbf k)\sim A_m,
\qquad A_m=\frac{3}{16(\pi m)^{3/2}}
\left(\frac{3^{18}}{2^{12}}\right)^m.
\end{equation}
Table~\ref{tab:sparse-asymptotic} compares this prediction with the finite sum in \eqref{eq:sparse-exact-degree}.  The first three dilations were also checked by the independent truncated coefficient algorithm of \cref{prop:player-dp}.  The example displays a one-dimensional transportation fiber but a three-dimensional count fluctuation: $q(F)=1$ controls the unweighted Ehrhart count, while $N-c(F)=3$ controls the power in the weighted degree asymptotic.

\begin{table}[htbp]
\centering
\caption{Exact finite-sum counts and their asymptotic approximation for the sparse four-player network.  Counts for $m\ge5$ are rounded to eight significant digits; the ratios are computed from unrounded integers.  The last column is the proportion contributed by the two proper endpoint supports.}
\label{tab:sparse-asymptotic}
\begin{tabular}{r r r r}
\hline
$m$ & $\mathcal D(F,m\mathbf k)$ & $\mathcal D(F,m\mathbf k)/A_m$ & $1-\mu_{m\mathbf k}(F)/\mathcal D(F,m\mathbf k)$\\
\hline
1 & $2750$ & 0.863443 & $1.458\times10^{-1}$ \\
2 & $98911190$ & 0.928686 & $8.632\times10^{-3}$ \\
5 & $2.2139173\times10^{22}$ & 0.971017 & $1.087\times10^{-6}$ \\
10 & $6.0135390\times10^{46}$ & 0.985433 & $2.326\times10^{-13}$ \\
20 & $1.2274431\times10^{96}$ & 0.992698 & $7.605\times10^{-27}$ \\
\hline
\end{tabular}

\end{table}

\section{Support Decomposition}
\label{sec:basic_structures}

The directed cycle already illustrates the balance constraint.  Its degree is zero unless all $k_i$ are equal.  If $k_i=k$ for every $i$, there is one balanced factor, with multiplicity $k$ on each cycle arc, and its weight is one; hence $\mathcal D=1$.

\subsection{Strongly connected decomposition}

For general graphs, the strongly connected components expose a block structure in the polynomial matrix.

\begin{theorem}
\label{thm:SCC}
Let $G$ be the player interaction graph, and let $C_1,\dots,C_r$ be its strongly connected components.  For each component, let \(\mathcal D(C_\ell)\) denote the degree determined by the diagonal equation--variable block supported on players in \(C_\ell\).  Then
\[
\mathcal{D}(G)=\prod_{\ell=1}^r \mathcal{D}(C_\ell).
\]
\end{theorem}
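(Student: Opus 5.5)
Order the strongly connected components topologically, so that every arc of $G$ between distinct components goes from $C_a$ to $C_b$ with $a<b$. Label the rows and columns of $A_{\mathrm{poly}}$ in player blocks following this order. Entry $((i,s),(j,t))$ is nonzero only when $j\to i\in E(G)$, so every inter-component entry lies on one side of the block diagonal. The permuted matrix is therefore block triangular, and its diagonal blocks are the equation--variable blocks $A_{\mathrm{poly}}(C_\ell)$ of the components. Permuting rows and columns simultaneously does not change the permanent.

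The key step is to show that the permanent of a block-triangular matrix with square diagonal blocks is the product of the permanents of those blocks. I would argue this combinatorially with \cref{cor:cycle_covers} or the balanced-factor picture of \cref{thm:eulerian-factor}. Take a balanced $\mathbf k$-factor $a$ on $E(G)$. Its arcs decompose into directed cycles, as noted after \cref{cor:reversal-symmetry}. A directed cycle stays inside one strongly connected component, so no arc between components carries positive multiplicity. Hence $a$ restricts to a balanced factor on each $C_\ell$, with margins $(k_i)_{i\in C_\ell}$. Conversely, any family of balanced factors on the components assembles into one on $G$. The weight $w_{\mathbf k}(a)$ in \eqref{eq:eulerian-weight} is a product over receivers $i$, and every arc entering $i$ in the support of $a$ lies in the component of $i$. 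So the weight factors as the product of the component weights.

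Summing over this bijection $\mathfrak E_{\mathbf k}(E(G))\cong\prod_\ell\mathfrak E_{\mathbf k|_{C_\ell}}(E(C_\ell))$ gives $\mathcal D(G)=\prod_\ell\mathcal D(C_\ell)$ by \eqref{eq:eulerian-degree}. The same argument works at the permanent level: a permutation selected by a block-triangular pattern must map each diagonal block into itself. This follows from the same flow-counting, because the total number of columns in blocks $\ge b$ equals the total number of rows there. That gives the normalization $\perm(A_{\mathrm{poly}})/\prod k_j!$ factor by factor.

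The main obstacle is matching this with the statement's definition of $\mathcal D(C_\ell)$, namely the degree of the diagonal block using only intra-component arcs. I need $\mathcal D(C_\ell)$ to equal the specialization $\Phi_{\mathbf k|_{C_\ell}}(\mathbf 1_{E(C_\ell)})$. If some component has no feasible factor, for example a singleton component with no internal arcs, then both sides are zero. I would handle that case separately, and also check singleton components, where the degree is $0$ since there are no loops and $k_i\ge1$.
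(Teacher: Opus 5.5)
Your proof is correct, but its key step runs at a different level from the paper's.

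The paper stays with the expanded $d\times d$ structure matrix. After topological ordering the matrix is block triangular, and every nonzero permanent term must preserve the diagonal blocks; this is the counting you sketch in your last paragraph, which the paper leaves implicit. Hence $\perm(M)=\prod_\ell\perm(M_{\ell\ell})$, and Datta's theorem is applied to $M$ and to each block.

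Your main argument works instead with balanced $\mathbf k$-factors. The cycle decomposition forces every inter-component arc to carry multiplicity zero, so $\mathfrak E_{\mathbf k}(E(G))$ splits as a product and $w_{\mathbf k}$ factors receiver by receiver. This gives slightly more than the count. The support-specialized generating polynomial itself factorizes into component polynomials, and no balanced factor ever uses an inter-component arc. It also makes the vanishing case transparent: a player on no directed cycle kills both sides. The paper's route is shorter and needs no cycle decomposition.

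The obstacle you flag is not a real one. The diagonal block of $A_{\mathrm{poly}}$ on $C_\ell$ is exactly the polynomial matrix of the induced game on $C_\ell$ with margins $\mathbf k|_{C_\ell}$. The combinatorial equalities in \eqref{eq:main-permanent} after the mixed-volume step hold for any number of players, so the block's normalized permanent equals $\Phi_{\mathbf k|_{C_\ell}}(\mathbf 1_{E(C_\ell)})$. Singleton components then need no separate treatment: their factor set is empty, so the product bijection already gives zero on both sides.
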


\begin{proof}
Topologically order the acyclic condensation of $G$.  The corresponding simultaneous ordering of equation and variable groups makes the structure matrix $M$ block upper triangular, with diagonal blocks $M_{\ell\ell}$ belonging to the induced component systems.  Every nonzero term in the permanent preserves these blocks successively, and therefore
\[
\perm(M)=\prod_{\ell=1}^r\perm(M_{\ell\ell}).
\]
Apply \cref{thm:Datta} to $M$ and to each diagonal block.
\end{proof}

\paragraph{Complex and feasible counts.}
All formulas above count isolated complex-torus roots for coefficients in a Zariski-open dense set, with algebraic multiplicity.  For a fixed payoff realization, the number of real roots can be smaller, and a real root is a feasible totally mixed equilibrium only when every independent probability and every eliminated probability $1-\sum_{s=1}^{k_i}x_{i,s}$ is strictly positive.  Reality, feasibility, dynamical stability, and sensitivity are payoff- or model-dependent and do not follow from the generic complex degree.

\section{Conclusion}
\label{sec:conclusion}

The edge-marked polynomial $\Phi_{\mathbf k}$ yields two coherent regimes.  At fixed margins, balanced factors give the complexity boundary, capacitated Hall criterion, sparse positive cores, and support comparative statics; under proportional dilation, the same fibers give the capacity and maximum-entropy asymptotic on the essential support.  Strongly connected components factorize the degree.  These results concern generic isolated complex roots; real feasibility and dynamics require additional payoff-specific analysis.
\section*{Declaration on the Use of AI Tools}

During the preparation of this manuscript, the authors used OpenAI Codex to assist with literature-search support, manuscript and proof review, language and \LaTeX{} editing, and computational consistency checks. All AI-assisted outputs, mathematical arguments, citations, and computations were independently reviewed and verified by the authors. The authors assume responsibility for all content.
\enlargethispage{\baselineskip}
\bibliographystyle{siamplain}
\bibliography{ref}

\end{document}